\theoremstyle{plain}
\newtheorem{theorem}{Theorem}[section]
\newtheorem*{theorem*}{Theorem}
\newtheorem{proposition}[theorem]{Proposition}
\newtheorem{lemma}[theorem]{Lemma}
\newtheorem{corollary}[theorem]{Corollary}
\theoremstyle{definition}
\newtheorem{definition}[theorem]{Definition}
\newtheorem*{definition*}{Definition}
\newtheorem{remark}[theorem]{Remark}
\numberwithin{equation}{section}
\newcommand{\C}{\mathbb{C}}
\newcommand{\D}{\mathbb{D}}
\newcommand{\N}{\mathbb{N}}
\newcommand{\Q}{\mathbb{Q}}
\newcommand{\Z}{\mathbb{Z}}
\newcommand{\Lip}{\mathcal{L}}
\newcommand{\LipW}{\Lip_{\textbf{w}}}
\newcommand{\LipWo}{\Lip_{\textbf{w},0}}
\newcommand{\lipwnorm}[1]{\left\|#1\right\|_{\textbf{w}}}
\newcommand{\Bloch}{\mathcal{B}}
\renewcommand{\a}{\alpha}
\renewcommand{\b}{\beta}
\renewcommand{\c}{\chi}
\renewcommand{\d}{\delta}
\newcommand{\e}{\varepsilon}
\newcommand{\f}{\varphi}
\renewcommand{\l}{\lambda}
\newcommand{\m}{\mu}
\renewcommand{\o}{\omega}
\newcommand{\s}{\sigma}
\renewcommand{\O}{\Omega}
\newcommand{\ben}{\begin{eqnarray}}
	\newcommand{\eeqn}{\end{eqnarray}}
\title[Multiplication Operators on the Weighted Lipschitz Space]{Multiplication Operators on the Weighted Lipschitz Space of a Tree}
\dedicatory{In memory of Walter Rudin}
\author{Robert F.~Allen\textsuperscript{1}, Flavia Colonna\textsuperscript{2}, and Glenn R.~Easley\textsuperscript{3}}
\address{\textsuperscript{1}Department of Mathematics, University of Wisconsin-La Crosse}
\address{\textsuperscript{2}Department of Mathematical Sciences, George Mason University}
\address{\textsuperscript{3}System Planning Corporation}
\email{allen.rob3@uwlax.edu, fcolonna@gmu.edu, geasley@sysplan.com}
\keywords{Multiplication operators, Lipschitz space, Trees.}
\subjclass[2010]{primary: 47B38; secondary: 05C05.}
\begin{document}

\begin{abstract}
We study the multiplication operators on the weighted Lipschitz space $\LipW$ consisting of the complex-valued functions $f$ on the set of vertices of an infinite tree $T$  rooted at $o$ such that $\sup_{v\ne o}|v||f(v)-f(v^-)|<\infty$, where $|v|$ denotes the distance between $o$ and $v$  and $v^-$ is the neighbor of $v$ closest to $o$. For the multiplication operator, we characterize boundedness, compactness, provide estimates on the operator norm and the essential norm, and determine the spectrum.  We prove that there are no isometric multiplication operators or isometric zero divisors on $\LipW$.
\end{abstract}

\maketitle

\section{Introduction}
Let $X$ be a Banach space of complex-valued functions on a set $\Omega$.  For a complex-valued function $\psi$ with domain $\Omega$, we define the \emph{multiplication operator with symbol $\psi$} on $X$ to be $M_\psi f = \psi f$ for $f \in X.$  The study of such operators with symbol attempts to tie the properties of the operator with the function theoretic properties of the symbol. The operator properties typically considered are boundedness, compactness, and being an isometry. Other aspects of interest are the determination of estimates on the operator norm as well as on the essential norm, and the identification of the spectrum and the essential spectrum.

A setting that has been widely considered in the literature is when $\O$ is the open unit disk $\D$ and $X$ is a Banach space of analytic functions on $\D$. Examples of such Banach spaces are the Hardy space $H^p$, the Bergman space $A^p$, and the Bloch space $\mathcal{B}$ (see \cite{Zhu:07} for more information on the operator theory on these spaces.)

In recent years, researchers have been developing versions of these spaces where the set $\Omega$ is a discrete space such as a tree or a discrete group. Historically, the function theory on trees has been largely devoted to studying the eigenfunctions of the Laplace operator (and in particular, the harmonic functions), defined as the averaging operator (with respect to a nearest-neighbor transition probability) at the neighbors of a vertex, minus the identity operator.

The study of the harmonic functions on discrete structures can be traced back many years in the literature. It was the harmonic analysis on trees developed by Cartier in \cite{Cartier:72} that made evident the analogy between trees endowed with the edge-counting metric and the open unit disk in the complex plane under the Poincar\'e  metric.

The Hardy spaces $H^p$ on trees have been studied by Kor\'{a}nyi, Picardello, and Taibleson in \cite{KoranyiPicardelloTaibleson:88}, and the theory of the $H^p$ spaces was further developed in \cite{DiBiasePicardello:95} by Di Biase and Picardello in the special case when the tree is homogeneous (that is, the vertices have the same number of neighbors).

Operators on discrete structures other than the Laplacian have been studied in a number of papers (e.g., see the works of Pavone \cite{Pavone:92}, \cite{Pavone:92-II}, \cite{Pavone:93}, Roe \cite{Roe:05}, and Rabinovich and Roch \cite{RabinovichRoch:07}, \cite{RabinovichRoch:08}, and \cite{RabinovichRoch:10}). Examples include  the composition operators on $L^p$ spaces associated with homogeneous trees, the Toeplitz operators on discrete groups, and the band-dominated operators defined on $\ell^p(X)$, where $X$ is a discrete metric space. The band-dominated operators on $\ell^p(X)$ are compositions of shift operators on $X$ with multiplication operators with symbols in $\ell^\infty(X)$ and have a natural connection to Schr\"odinger operators when $X$ is a graph.

In \cite{CohenColonna:92}, Cohen and the second author defined the Bloch space on an isotropic homogeneous tree $T$ by considering the harmonic functions which are Lipschitz when regarded as function between metric spaces, where the distance on $T$ counts the edges between pairs of vertices and $\C$ is endowed with the Euclidean distance. However, in \cite{CohenColonna:94},
where embeddings of homogeneous trees of even degree in the hyperbolic disk were constructed so that the edges are geodesic arcs of the same hyperbolic length, it was shown that the harmonicity condition on a tree from a nearest-neighbor perspective is not related to the classical harmonicity (and hence analyticity) condition on the disk derived through  interpolation.

This suggests that for the purpose of the study of certain operators with symbol such as the multiplication, or more generally, the weighted composition operators, these spaces are not natural analogues of their continuous counterparts. In particular, the study of multiplication operators on such spaces is of no interest, since in order for a multiplication operator to preserve harmonicity on a tree, its symbol must be a constant function. So, for the study of the theory of such operators with symbol, the spaces of functions on trees need to be less restrictive.

In \cite{Colonna:89} (see also \cite{Zhu:07}) it was shown that the analytic functions $f:\D\to\C$ such that \begin{equation*}\b_f=\displaystyle \sup_{z \in \D} (1-|z|^2)|f'(z)| < \infty\end{equation*} are precisely the Lipschitz functions with respect to the Poincar\'e distance $\rho$ on $\D$ and the Euclidean distance on $\C$ and $\b_f$ is the Lipschitz constant of $f$, namely
\begin{equation*}\b_f=\sup_{z\ne w}\frac{|f(z)-f(w)|}{\rho(z,w)}.\end{equation*} The collection of such functions is called the \emph{Bloch space}.

In \cite{ColonnaEasley:10}, the last two authors defined the Lipschitz space
$\mathcal{L}$ on an infinite tree $T$ rooted at vertex $o$ to be the collection of all complex-valued functions on the vertices of the tree that are Lipschitz with respect to the edge-counting metric on $T$ and the Euclidean metric on $\C$.  They showed these are precisely the functions $f$ for which \begin{equation*}\sup_{v \in T^*} |f(v)-f(v^-)| < \infty,\end{equation*} where $T^* = T\setminus\{o\}$.  It was shown that $\mathcal{L}$ is a functional Banach space under the norm
\begin{equation*}\|f\|_\Lip=|f(o)|+\sup_{v\in T^*}|f(v)-f(v^-)|,\end{equation*} and the multiplication operator was studied in detail on $\mathcal{L}$ as well as on a closed separable subspace called the \emph{little Lipschitz space}.  The space $\Lip$ can be viewed as a discrete analogue of the space $\mathcal{B}$.

In this work, we carry out the study of the multiplication operators on the space $\LipW$ of the complex-valued functions $f$ on an infinite tree $T$ rooted at $o$ satisfying the condition \begin{equation*}\sup_{v\in T^*}|v||f(v)-f(v^-)|<\infty,\end{equation*}
where $|v|$ is the number of edges in the unique path from $o$ to $v$ and $v^-$ is the neighbor of $v$ closest to $o$. The interest in studying this space is due to the fact that the bounded functions in $\LipW$ are the symbols of the bounded multiplication operators on $\Lip$ \cite{ColonnaEasley:10}. The space $\LipW$ (where the subscript \textbf{w} stands for {\textit{weight}}) can be regarded as a discrete analogue of the weighted Bloch space $\Bloch_\ell$ defined as the set of analytic functions $f$ on $\D$ such that
\begin{equation*}\sup_{z\in\D}(1-|z|^2)\log\frac{2}{1-|z|^2}|f'(z)|<\infty,\end{equation*} since the logarithmic weight is closely related to the Poincar\'{e} distance \begin{equation*}\rho(0,z)=\frac12\log\frac{1+|z|}{1-|z|}.\end{equation*}
The multiplication operators and cyclic vectors on the weighted Bloch space were studied by Ye in \cite{Ye:06}. In this work, we prove the discrete counterparts of several results in \cite{Ye:06} and expand the scope of the analysis of such operators.

\subsection{Organization of the Paper}
After giving some preliminary definitions and notation on trees, in Section \ref{Section:WLS}, we show that $\LipW$ is a Banach space under the norm \begin{equation*}\lipwnorm{f} = |f(o)| + \displaystyle\sup_{v \in T^*} |v||f(v)-f(v^-)|\end{equation*} and define a particular closed subspace $\LipWo$ we call the \emph{little weighted Lipschitz space}. We also give some useful properties that will be needed in the following sections.  In Section \ref{Section:Cyclic}, we define the notion of a cyclic vector for $\LipWo$ and determine a class of cyclic vectors.

In Section \ref{Section:Bound}, we characterize the bounded multiplication operators $M_\psi$ on $\LipW$ and $\LipWo$ in terms of the symbol $\psi$ and establish estimates on the operator norm in Section \ref{Section:Norm}. In Section \ref{Section:Spectrum}, we determine the spectrum, the point spectrum and the approximate spectrum of $M_\psi$. We also show that $M_\psi$ is bounded below if and only if the modulus of $\psi$ is bounded away from 0.

In Section \ref{Section:Compact}, we characterize the compact multiplication operators on $\LipW$ and $\LipWo$ in terms of a little-oh condition corresponding to the big-oh condition for boundedness.  In Section \ref{ess_norm}, we determine estimates on the essential norm of $M_\psi$.

In Section~\ref{isometries}, we characterize the isometric multiplication operators on $\LipW$ and $\LipWo$ and show that there are no isometric zero divisors on these spaces.

\subsection{Preliminary Definitions and Notation}
By a \emph{tree} $T$ we mean a locally finite, connected, and simply-connected graph, which, as a set, we identify with the collection of its vertices. By a \emph{function on a tree} we mean a complex-valued function on the set of its vertices. Two vertices
$v$ and $w$ are called \emph{neighbors} if there is an edge $[v,w]$
connecting them, and we use the notation $v\sim w$. A vertex is called \emph{terminal} if it has a unique neighbor. A \emph{path} is a finite or infinite sequence of vertices $[v_0,v_1,\dots]$ such that $v_k\sim v_{k+1}$ and  $v_{k-1}\ne v_{k+1}$, for all $k$. Given a tree $T$ rooted at $o$ and a vertex $v\in T$, a vertex $w$ is called a \emph{descendant} of $v$  if $v$ lies in the unique path from $o$ to $w$. The vertex $v$ is then called an \emph{ancestor} of $w$.  The vertex $v$ is called a \emph{child} of $v^-$.

For $v\in T$, the set $S_v$ consisting of $v$ and all its descendants is called the \emph{sector} determined by $v$. Define the \emph{length} of a finite path
$[v=v_0,v_1,\dots,w=v_n]$ (with $v_k\sim v_{k+1}$ for $k=0,\dots, n-1$) to be the number $n$ of edges connecting $v$ to $w$. The \emph{distance}, $d(v,w)$, between vertices $v$ and $w$ is the length of the unique path connecting $v$ to $w$. Fixing $o$ as the root of the tree, we define the \emph{length} of a vertex $v$, by $|v|=d(o,v)$.

In this paper, we shall assume the tree $T$ to be without terminal vertices (and hence infinite), and rooted at a vertex $o$ and shall denote by $L^\infty$ the space of the bounded functions $f$ on the tree equipped with the supremum norm \begin{equation*}\|f\|_\infty=\sup\limits_{v\in T}|f(v)|.\end{equation*}

\section{The Weighted Lipschitz Space}\label{Section:WLS}
Let $T$ be a tree and let $\LipW$ denote the set of functions $f$ on $T$ such that
$\sup\limits_{v\in T^*}|v|Df(v)<\infty,$ where $Df(v)=|f(v)-f(v^-)|$ for $v\in T^*$.
For $f\in \LipW$, define
\rm{\begin{equation*}\lipwnorm{f}=|f(o)|+\sup_{v\in T^*}|v|Df(v).\end{equation*}}

\begin{proposition}\label{modulus_est} If \rm{$f\in \LipW$} and $v\in T^*$, then
\rm{\ben |f(v)|\le (1+\log |v|)\lipwnorm{f}.\label{modulusest}\eeqn}
\end{proposition}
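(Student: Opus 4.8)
The plan is to bound $|f(v)|$ by telescoping along the unique path from $o$ to $v$ and then controlling the resulting sum by a harmonic-series estimate. Write the path from $o$ to $v$ as $o=v_0, v_1, \dots, v_n=v$ where $n=|v|$ and $|v_k|=k$. Then
\begin{equation*}
f(v)-f(o)=\sum_{k=1}^{n}\bigl(f(v_k)-f(v_{k-1})\bigr),
\end{equation*}
so that
\begin{equation*}
|f(v)|\le |f(o)|+\sum_{k=1}^{n} Df(v_k)=|f(o)|+\sum_{k=1}^{n}\frac{1}{k}\,\bigl(k\,Df(v_k)\bigr)\le |f(o)|+\Bigl(\sum_{k=1}^{n}\frac1k\Bigr)\sup_{w\in T^*}|w|Df(w).
\end{equation*}

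Next I would compare $\sum_{k=1}^{n}\frac1k$ with $\log n$. The standard estimate $\sum_{k=1}^{n}\frac1k\le 1+\log n$ for $n\ge 1$ (obtained by bounding $\frac1k\le \int_{k-1}^{k}\frac{dt}{t}$ for $k\ge 2$) gives
\begin{equation*}
|f(v)|\le |f(o)|+(1+\log n)\sup_{w\in T^*}|w|Df(w)\le (1+\log n)\Bigl(|f(o)|+\sup_{w\in T^*}|w|Df(w)\Bigr)=(1+\log|v|)\lipwnorm{f},
\end{equation*}
where in the middle inequality I used $1\le 1+\log n$ for $n\ge 1$ to absorb the $|f(o)|$ term into the common factor $1+\log n$. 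This yields exactly \eqref{modulusest}.

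There is essentially no serious obstacle here; the only point requiring a little care is the edge case $|v|=1$, where $\log|v|=0$ and the bound reads $|f(v)|\le\lipwnorm{f}$. This still holds since $|f(v)|\le |f(o)|+Df(v)\le |f(o)|+\sup_{w\in T^*}|w|Df(w)=\lipwnorm{f}$, consistent with the formula. Thus the argument is a straightforward telescoping estimate combined with the elementary harmonic-sum bound $\sum_{k=1}^{n}\frac1k\le 1+\log n$, and the constant in the statement is sharp in the sense that it matches the logarithmic growth.
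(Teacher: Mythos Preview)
Your proof is correct and follows essentially the same approach as the paper: both telescope along the path from $o$ to $v$ and control the resulting harmonic sum by $1+\log|v|$. The paper packages the telescoping as an induction on $|v|$, isolates the estimate $\tfrac{1}{k}\le\log\tfrac{k}{k-1}$ as a separate lemma (Lemma~\ref{easy}), and handles $f(o)\ne 0$ by a reduction, whereas you do everything in one pass; the underlying argument is identical.
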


For the proof we need the following result.

\begin{lemma}\label{easy} For $x>1$, we have
\begin{equation*}\frac1{x}\le \log\left(\frac{x}{x-1}\right)\le \frac1{x-1}.\end{equation*}
\end{lemma}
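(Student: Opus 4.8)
The plan is to express the middle quantity as a definite integral and then bound it by the monotonicity of the integrand. Observe first that for $x>1$ we have $x-1>0$, so both logarithms are defined and the fundamental theorem of calculus gives
\[
\log\left(\frac{x}{x-1}\right)=\log x-\log(x-1)=\int_{x-1}^x\frac{dt}{t}.
\]
The integrand $1/t$ is positive and strictly decreasing on the interval $[x-1,x]$, whose length is exactly $1$; hence its minimum there is $1/x$ (attained at the right endpoint) and its maximum is $1/(x-1)$ (attained at the left endpoint).

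First I would record the elementary estimate for a continuous decreasing function $g$ on an interval $[a,b]$, namely
\[
g(b)\,(b-a)\le\int_a^b g(t)\,dt\le g(a)\,(b-a).
\]
Applying this with $g(t)=1/t$, $a=x-1$, $b=x$, and $b-a=1$ yields at once
\[
\frac1x\le\int_{x-1}^x\frac{dt}{t}\le\frac1{x-1},
\]
which is precisely the claimed chain of inequalities.

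Alternatively, one can bypass the integral and invoke the Mean Value Theorem directly: applied to $\log t$ on $[x-1,x]$, it produces a point $\xi\in(x-1,x)$ with $\log x-\log(x-1)=1/\xi$, and the bounds $1/x<1/\xi<1/(x-1)$ follow from $x-1<\xi<x$. Both routes are entirely elementary, so there is no genuine obstacle here; the only point requiring care is the hypothesis $x>1$, which guarantees $x-1>0$ and thereby keeps $1/t$ positive and finite throughout $[x-1,x]$ and both logarithms well defined.
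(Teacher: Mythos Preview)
Your proof is correct and is actually more streamlined than the paper's. The paper treats the two inequalities separately: the upper bound comes from the elementary inequality $\log(1+u)\le u$ applied with $u=\frac{1}{x-1}$, while the lower bound is obtained by showing that the function $\varphi(x)=x\log\!\bigl(\tfrac{x}{x-1}\bigr)$ is decreasing with $\lim_{x\to\infty}\varphi(x)=1$, whence $\varphi(x)\ge 1$. Your integral (or Mean Value Theorem) argument handles both bounds simultaneously and symmetrically, avoiding the monotonicity-plus-limit computation entirely. The paper's route has the minor advantage that the function $\varphi$ reappears implicitly in later estimates (e.g., the quantity $|v|\bigl(\log|v|-\log(|v|-1)\bigr)$ in Theorem~\ref{boundedness}), but for the lemma itself your argument is the cleaner one.
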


\begin{proof} The upper estimate is an immediate consequence of the inequality
\begin{equation*}\log\left(1+\frac{1}{x-1}\right)\le \frac1{x-1}.\end{equation*}
The lower estimate follows from the fact that the function $\f(x)=x\log\left(\displaystyle\frac{x}{x-1}\right)$ is decreasing and $\displaystyle\lim\limits_{x\to\infty}\f(x)=1$.\end{proof}

%
\begin{proof}[Proof of Proposition \ref{modulus_est}] Let us first assume $f(o)=0$ and argue by induction on $|v|$. For $|v|=1$, we have \begin{equation*}|f(v)|=|v|Df(v)\le \lipwnorm{f}=(1+\log|v|)\lipwnorm{f}.\end{equation*}
Let $n\in\N$ and assume $|f(w)|\le (1+\log|w|)\lipwnorm{f}$ whenever $w$ is a vertex such that $1\le |w|<n$. Let $v$ be a vertex of length $n$. Then, by Lemma~\ref{easy} we get
\ben |f(v)|&\le &|f(v^-)|+|f(v)-f(v^-)|\le \left(1+\log(|v|-1)+\frac1{|v|}\right)\lipwnorm{f}\nonumber\\
&\le &(1+\log|v|)\lipwnorm{f}.\nonumber\eeqn

On the other hand, if $f(o)\ne 0$, let $g(v)=f(v)-f(o)$ for $v\in T$. By the previous case, we have $|g(v)|\le (1+\log|v|)\lipwnorm{g}$ for $v\in T^*$. Since $\lipwnorm{f}=|f(o)|+\lipwnorm{g}$, we deduce that \begin{equation*}|f(v)|\le |f(o)|+|g(v)|\le |f(o)|+(1+\log|v|)\lipwnorm{g}\le (1+\log|v|)\lipwnorm{f},\end{equation*} completing the proof. \end{proof}

\begin{theorem}\label{Banach} $\rm{\LipW}$ is a Banach space under the norm $\lipwnorm{\cdot}$.
\end{theorem}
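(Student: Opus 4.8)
The plan is to verify the standard two-part criterion: first that $\lipwnorm{\cdot}$ is indeed a norm on $\LipW$, and then that $\LipW$ is complete under it. The norm axioms are routine: homogeneity and the triangle inequality follow immediately from the corresponding properties of $|\cdot|$ on $\C$ and of the supremum, and $\lipwnorm{f}=0$ forces $f(o)=0$ and $|v|Df(v)=0$ for all $v\in T^*$, hence $f(v)=f(v^-)$ for every $v$, which by induction on $|v|$ gives $f\equiv 0$. The bulk of the argument is completeness.

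For completeness I would take a Cauchy sequence $(f_n)$ in $\LipW$ and first produce a candidate pointwise limit. The key observation is that the $\LipW$-norm controls pointwise values: by Proposition~\ref{modulus_est}, $|f_n(v)-f_m(v)|\le(1+\log|v|)\lipwnorm{f_n-f_m}$ for $v\in T^*$ (and $|f_n(o)-f_m(o)|\le\lipwnorm{f_n-f_m}$ trivially), so for each fixed vertex $v$ the sequence $(f_n(v))$ is Cauchy in $\C$ and converges to some $f(v)$. This defines a function $f$ on $T$. Next I would show $f\in\LipW$ and $\lipwnorm{f_n-f}\to 0$. Given $\e>0$, choose $N$ so that $\lipwnorm{f_n-f_m}<\e$ for $n,m\ge N$; then for every $v\in T^*$ we have $|v|\,|(f_n-f_m)(v)-(f_n-f_m)(v^-)|<\e$, and letting $m\to\infty$ (using pointwise convergence at the two vertices $v$ and $v^-$) yields $|v|\,|(f_n-f)(v)-(f_n-f)(v^-)|\le\e$ for all $v\in T^*$ and $n\ge N$. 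Together with $|f_n(o)-f(o)|\le\e$, taking the supremum over $v$ gives $\lipwnorm{f_n-f}\le 2\e$ (or $\le\e$ with slightly more care) for $n\ge N$. In particular $f=(f-f_N)+f_N$ is a sum of two elements of $\LipW$, hence lies in $\LipW$, and $\lipwnorm{f_n-f}\to 0$.

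The main obstacle — though it is more of a bookkeeping point than a genuine difficulty — is making sure the limit function actually lies in $\LipW$ rather than merely being a pointwise limit with finite ``distance'' from the tail of the sequence; this is handled precisely by writing $f=(f-f_N)+f_N$ once we know $\lipwnorm{f-f_N}<\infty$, which the estimate above provides. One should also be slightly careful that the interchange of limits ($m\to\infty$ inside the inequality $|v|\,|(f_n-f_m)(v)-(f_n-f_m)(v^-)|<\e$) is legitimate: it is, because for fixed $n$ and fixed $v$ this is a statement about the limit of a convergent sequence of complex numbers, and non-strict inequality passes to the limit. No compactness or diagonalization is needed since the pointwise limit is obtained vertex-by-vertex.
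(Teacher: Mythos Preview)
Your proof is correct and follows the same overall architecture as the paper's: verify the norm axioms, use Proposition~\ref{modulus_est} to get pointwise convergence of a Cauchy sequence, then establish membership in $\LipW$ and norm convergence. The one notable difference is in the last step. You pass to the limit $m\to\infty$ directly inside the Cauchy estimate $|v|\,D(f_n-f_m)(v)<\e$ to obtain $\sup_{v\in T^*}|v|\,D(f_n-f)(v)\le\e$, and then deduce $f\in\LipW$ from $f=(f-f_N)+f_N$. The paper instead first shows $f\in\LipW$ via a $\liminf$ bound and then argues norm convergence by contradiction, extracting a subsequence and vertices where the norm stays bounded away from zero. Your route is the standard and more economical one; the paper's contradiction argument works but is unnecessarily elaborate for this purpose. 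Your remark about non-strict inequality surviving the limit is exactly the point that justifies the interchange, and no further care is needed.
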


\begin{proof} It is immediate to see that $\LipW$ is a vector space and that $f\mapsto \lipwnorm{f}$ is a semi-norm. It is also evident that the norm of the function identically 0 is 0. Conversely, assume $\lipwnorm{f}=0$. Then $Df$ is identically 0. Thus, $f$ is a constant and since $f(o)=0$, $f$ is identically 0.

To prove that $\LipW$ is a Banach space, let $\{f_n\}$ be Cauchy in $\LipW$. For $n,m\in \N$, since $|f_n(o)-f_m(o)|\le \lipwnorm{f_n-f_m}$, and by Proposition~\ref{modulus_est}, for $v\in T^*$,
\ben |f_n(v)-f_m(v)|\le (1+\log |v|)\lipwnorm{f_n-f_m},\nonumber\eeqn
the sequence $\{f_n(v)\}$ is Cauchy for each $v\in T$. Hence it converges pointwise to some function $f$. We now show that $f\in\LipW$.

Let $v\in T^*$ and fix $n\in\N$. Then
\ben\label{Eq1}|v|Df(v) \le |v||f(v)-f_n(v)|+|v|Df_n(v)+|v||f_n(v^-)-f(v^-)|.\eeqn
Since for each $v\in T^*$, $|v|Df_n(v)\le \lipwnorm{f_n}$ and $\{f_n\}$ is Cauchy in $\LipW$, and hence bounded, $\{|v|Df_n(v)\}$ is uniformly bounded by some constant $C$, and so (\ref{Eq1}) yields \begin{equation*}|v|Df(v)\le \liminf_{n\to\infty}|v|Df_n(v)\le C.\end{equation*} Hence $f\in\LipW$.

To conclude the proof of the completeness, we need to show that $f_n$ converges to $f$ in norm as $n\to\infty$.  Since $f_n(o)\to f(o)$, it suffices to show that \begin{equation*}\sup\limits_{v\in T^*}|v|D(f_n-f)(v)\to 0\end{equation*} as $n\to\infty$. Arguing by contradiction, suppose there exist $\e>0$ and a subsequence $\{f_{n_j}\}_{j\in\N}$
 such that $\displaystyle\sup_{v\in T^*}|v|D(f_{n_j}-f)(v)>\e$ for all $j\in\N$. Then for each $j\in\N$, we may pick two neighboring vertices $v_{n_j}$ and $w_{n_j}$, with $v_{n_j}$ child of $w_{n_j}$, such that \begin{equation*}|v_{n_j}||f_{n_j}(v_{n_j})-f(v_{n_j})-(f_{n_j}(w_{n_j})-f(w_{n_j}))|\ge \e.\end{equation*}  Since $\{f_{n_j}\}$ is Cauchy in $\LipW$, there exists a positive integer $j_0$ such that for each $j,h\ge j_0$, and $v\in T^*$, we have
\begin{equation*}|v||f_{n_j}(v)-f_{n_h}(v)-(f_{n_j}(v^-)-f_{n_h}(v^-))|\le \lipwnorm{f_{n_j}-f_{n_h}}<\frac{\e}{2}.\end{equation*} In particular, for all $h\ge j_0$, we have \ben |v_{n_{j_0}}||f_{n_{j_0}}(v_{n_{j_0}})-f_{n_h}(v_{n_{j_0}})-(f_{n_{j_0}}(w_{n_{j_0}})-f_{n_h}(w_{n_{j_0}}))|<\frac{\e}{2}.\label{firstest}\eeqn On the other hand, by the pointwise convergence of $f_{n_h}$ to $f$, for all integers $h$ sufficiently large
\ben |f_{n_h}(v_{n_{j_0}})-f(v_{n_{j_0}})-(f_{n_{h}}(w_{n_{j_0}})-f(w_{n_{j_0}}))|<\frac{\e}{2|v_{n_{j_0}}|}.\label{sndest}\eeqn
Thus, by the triangle inequality, from (\ref{firstest}) and (\ref{sndest}) we deduce that
\begin{equation*}|v_{n_{j_0}}||f_{n_{j_0}}(v_{n_{j_0}})-f(v_{n_{j_0}})-(f_{n_{j_0}}(w_{n_{j_0}})-f(w_{n_{j_0}}))|<\e,\end{equation*} contradicting the choice of $v_{n_{j_0}}$ and $w_{n_{j_0}}$. Therefore $\LipW$ is a Banach space.\end{proof}

A Banach space $X$ of complex-valued functions on a set $\Omega$ is said to be a \emph{functional Banach space} if for each $\o\in \Omega$, the point evaluation functional \begin{equation*}e_\o(f) = f(\o),\quad f \in X,\end{equation*} is bounded; that is, there exists a constant $C>0$ such that $|f(\o)|\le C\|f\|$, for each $f\in X$.

\begin{lemma}[Lemma 11 of \cite{DurenRombergShields:69}]\label{drs} Let $X$ be a functional Banach space on the set $\Omega$ and let $\psi$ be a complex-valued function on $\Omega$ such that $M_\psi$ maps $X$ into itself. Then $M_\psi$ is bounded on $X$ and $|\psi(\o)|\le \|M_\psi\|$ for all $\o\in\O$. In particular, $\psi$ is bounded.\end{lemma}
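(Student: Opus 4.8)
The plan is to obtain boundedness from the Closed Graph Theorem and the norm estimate from a spectral-radius-type iteration.

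First, for boundedness: since $X$ is a Banach space and $M_\psi\colon X\to X$ is linear, by the Closed Graph Theorem it suffices to show that the graph of $M_\psi$ is closed. So I would take sequences with $f_n\to f$ and $M_\psi f_n\to g$ in $X$ and show $g=\psi f$. Because each point evaluation $e_\o$ is a bounded functional on $X$, norm convergence forces pointwise convergence: $f_n(\o)\to f(\o)$ and $(\psi f_n)(\o)\to g(\o)$ for every $\o\in\Omega$. On the other hand $(\psi f_n)(\o)=\psi(\o)f_n(\o)\to\psi(\o)f(\o)$, so $g(\o)=\psi(\o)f(\o)$ for all $\o$, i.e.\ $g=M_\psi f$. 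Hence the graph is closed and $M_\psi$ is bounded.

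Next, for the estimate $|\psi(\o)|\le\|M_\psi\|$: fix $\o\in\Omega$ and set $C_\o=\|e_\o\|<\infty$. For each $n\in\N$ one has $M_\psi^n=M_{\psi^n}$, hence $\|M_{\psi^n}\|\le\|M_\psi\|^n$. Applying the point-evaluation bound to $M_{\psi^n}f$ for an arbitrary $f\in X$ gives
\[ |\psi(\o)|^n\,|f(\o)| = |(M_{\psi^n}f)(\o)| \le C_\o\,\|M_{\psi^n}f\| \le C_\o\,\|M_\psi\|^n\,\|f\|. \]
Choosing $f$ with $f(\o)\ne 0$ (for instance the constant function $1$, which lies in $\LipWo\subseteq\LipW$ in the cases relevant to us), dividing by $|f(\o)|$, taking $n$-th roots, and letting $n\to\infty$ yields $|\psi(\o)|\le\|M_\psi\|$. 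Since $\o$ was arbitrary, $\|\psi\|_\infty\le\|M_\psi\|<\infty$, so $\psi$ is bounded.

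I do not expect a genuine obstacle here; the argument is soft. The only point requiring a word of care is that the inequality $|\psi(\o)|\le\|M_\psi\|$ implicitly uses that $e_\o$ is not identically zero (equivalently, that some element of $X$ is nonzero at $\o$), which holds automatically in $\LipW$ and $\LipWo$ since they contain the nonzero constants. Alternatively, one can bypass the iteration entirely: a direct computation shows that the bounded adjoint satisfies $M_\psi^{*}e_\o=\psi(\o)e_\o$, so $e_\o$ is an eigenvector of $M_\psi^{*}$ with eigenvalue $\psi(\o)$ whenever $e_\o\ne 0$, and therefore $|\psi(\o)|\le\|M_\psi^{*}\|=\|M_\psi\|$.
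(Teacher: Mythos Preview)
The paper does not supply its own proof of this lemma; it is quoted verbatim as Lemma~11 of Duren--Romberg--Shields and used as a black box. Your proposal therefore cannot be compared to a proof in the paper, but it does furnish a correct and standard proof of the cited result: the Closed Graph argument for boundedness is exactly right, and either the $n$-th root iteration or the adjoint observation $M_\psi^{*}e_\o=\psi(\o)e_\o$ yields the estimate $|\psi(\o)|\le\|M_\psi\|$ whenever $e_\o\ne 0$. You correctly flag the only delicate point, namely that the pointwise bound requires $e_\o$ to be a nonzero functional; in the paper's setting this is automatic since $\LipW$ and $\LipWo$ contain the constant functions.
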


\begin{corollary}\label{funct_Banach} The set \rm{$\LipW$} is a functional Banach space. If $M_\psi$ is a multiplication operator on \rm{$\LipW$}, then $M_\psi$ is bounded, its symbol $\psi$ is bounded and $\|\psi\|_\infty\le \|M_\psi\|$.
\end{corollary}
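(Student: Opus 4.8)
The plan is to deduce both assertions formally from results already in hand: the completeness of $\LipW$ (Theorem~\ref{Banach}), the growth estimate of Proposition~\ref{modulus_est}, and Lemma~\ref{drs}. First I would check that every point evaluation functional $e_v$ is bounded on $\LipW$. For the root this is trivial, since $|f(o)|\le\lipwnorm{f}$ is built into the definition of the norm. For $v\in T^*$, Proposition~\ref{modulus_est} gives the bound $|f(v)|\le(1+\log|v|)\lipwnorm{f}$, so $e_v$ is bounded with $\|e_v\|\le 1+\log|v|$. Combining this with the completeness of $\LipW$ from Theorem~\ref{Banach} shows that $\LipW$ is a functional Banach space, which is the first claim.

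The remaining assertions then follow at once from Lemma~\ref{drs} (Lemma 11 of \cite{DurenRombergShields:69}). By hypothesis $M_\psi$ maps $\LipW$ into itself, and we have just verified that $\LipW$ is a functional Banach space; hence the lemma yields that $M_\psi$ is bounded, that $|\psi(v)|\le\|M_\psi\|$ for every vertex $v$, and consequently that $\psi\in L^\infty$ with $\|\psi\|_\infty\le\|M_\psi\|$.

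I do not expect a genuine obstacle here: the substance of the corollary is simply that the weighted Lipschitz norm dominates point evaluations, which is precisely what Proposition~\ref{modulus_est} records, so that the abstract (closed-graph type) argument behind Lemma~\ref{drs} applies without modification. The only place where the earlier work is actually used is the logarithmic growth bound of Proposition~\ref{modulus_est}; without it one would not know a priori that $e_v$ is bounded for vertices far from the root, and the functional Banach space property — and hence everything else — could fail.
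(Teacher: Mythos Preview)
Your argument is correct and mirrors the paper's proof essentially line for line: boundedness of $e_o$ from the definition of the norm, boundedness of $e_v$ for $v\in T^*$ from Proposition~\ref{modulus_est}, and then a direct appeal to Lemma~\ref{drs}. There is nothing to add.
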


\begin{proof} Let $f\in\LipW$. Then $|f(o)|\le \lipwnorm{f}$ and fixing $v\in T^*$, inequality (\ref{modulusest}) shows that the point evaluation functional $e_v(f)=f(v)$ is bounded. Thus, $\LipW$ is a functional Banach space. The second statement is an immediate consequence of Lemma~\ref{drs}.\end{proof}

Define the {\emph{little weighted Lipschitz space} to be the subspace $\LipWo$ of $\LipW$ consisting of the functions $f$ such that \begin{equation*}\lim_{|v|\to\infty}|v|Df(v)=0.\end{equation*}

\begin{proposition}\label{lm0} If \rm{$f\in\LipWo$}, then $\lim\limits_{|v|\to\infty}\frac{f(v)}{\log|v|}=0.$
\end{proposition}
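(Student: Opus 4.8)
The plan is to reduce to the case $f(o)=0$ and then bound $|f(v)|$ by telescoping the increments of $f$ along the geodesic from $o$ to $v$. The key observation is that the weights $|v_k|=k$ convert the little-oh decay of $|v|Df(v)$ into a tail that is $\e$ times a harmonic partial sum, and $\sum_{k=1}^{n}k^{-1}$ grows like $\log n$, which is exactly the normalization appearing in the statement.

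First I would note that replacing $f$ by $g=f-f(o)$ changes neither membership in $\LipWo$ (since $Dg=Df$) nor the value of $\lim_{|v|\to\infty}f(v)/\log|v|$ (since $f(o)/\log|v|\to0$), so there is no loss in assuming $f(o)=0$.

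Next, fix $\e>0$. Since $f\in\LipWo$, choose $N\in\N$ with $|w|Df(w)<\e$ for every vertex $w$ with $|w|>N$. Given $v$ with $|v|=n>N$, let $o=v_0,v_1,\dots,v_n=v$ be the path from $o$ to $v$, so $|v_k|=k$ and $v_k^-=v_{k-1}$. The triangle inequality gives $|f(v)|\le\sum_{k=1}^{n}Df(v_k)$, and I would split this sum at $N$. For $k\le N$ one has $Df(v_k)\le\lipwnorm{f}/k$, so the \emph{head} $\sum_{k=1}^{N}Df(v_k)$ is bounded by a constant $C_\e:=\lipwnorm{f}\sum_{k=1}^{N}k^{-1}$ independent of $v$; for $k>N$ one has $Df(v_k)<\e/k$, so the \emph{tail} $\sum_{k=N+1}^{n}Df(v_k)$ is at most $\e\sum_{k=N+1}^{n}k^{-1}\le\e(1+\log n)$ by the standard estimate on partial sums of the harmonic series (of the same flavor as Lemma~\ref{easy}). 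Hence $|f(v)|\le C_\e+\e(1+\log|v|)$ whenever $|v|>N$; dividing by $\log|v|$ and letting $|v|\to\infty$ gives $\limsup_{|v|\to\infty}|f(v)|/\log|v|\le\e$, and since $\e>0$ is arbitrary the limit is $0$.

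I do not anticipate a genuine obstacle. The one point that needs care is that the head sum, while it depends on $\e$ through $N$, must genuinely be a constant independent of the vertex $v$, so that it is annihilated by the factor $\log|v|$ in the denominator; everything else reduces to the elementary harmonic-sum bound already used in the proof of Proposition~\ref{modulus_est}.
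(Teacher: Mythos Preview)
Your proposal is correct and follows essentially the same approach as the paper: fix $\e>0$, choose $N$ from the little-oh condition, telescope $|f(v)|$ along the geodesic, and split into a head bounded by a constant (depending on $N$ but not on $v$) and a tail bounded by $\e$ times a harmonic partial sum comparable to $\log|v|$. The only cosmetic difference is that the paper telescopes from the ancestor $w$ of $v$ with $|w|=N$ and invokes Proposition~\ref{modulus_est} to bound $|f(w)|$, whereas you telescope from $o$ after reducing to $f(o)=0$ and bound each head term by $\lipwnorm{f}/k$; the resulting estimates and the conclusion are the same.
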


\begin{proof} If $f$ is constant then the result holds trivially. Assume $f$ is nonconstant, so that $\b_f=\sup\limits_{v\in T^*}|v|Df(v)>0$, and fix $\e\in (0,\b_f)$. Then, there exists $N\in\N$ such that $|v|Df(v)<\e$, for all $v\in T$, with $|v|\ge N$. For $|w|=N$ and $v$ a descendant of $w$, let $u_0=w,u_1,\dots,u_{|v|-N}=v$ be the vertices in the path from $w$ to $v$, where ${u_j}^-=u_{j-1}$, $j=1,\dots,|v|-N$. By the triangle inequality and Proposition~\ref{modulus_est}, we have
\ben |f(v)|&\le &|f(w)|+\sum_{j=1}^{|v|-|w|}|f(u_j)-f(u_{j-1})|\nonumber\\
&\le &(1+\log N)\lipwnorm{f}+\e\sum_{k=N+1}^{|v|}\frac{1}{k}\nonumber\\
&\le &\left(1+\sum_{k=1}^{N-1}\frac1{k}\right)\lipwnorm{f}-\e\sum_{k=2}^N\frac1{k}+\e\sum_{k=2}^{|v|}\frac1{k}\nonumber\\
&<&  2\lipwnorm{f}+(\lipwnorm{f}-\e)\sum_{k=2}^N\frac1{k}+\e\log|v|.\nonumber\eeqn
Therefore, for all vertices $v$ of length greater than $N$ we obtain
\begin{equation*}\frac{|f(v)|}{\log|v|}<\frac{2\lipwnorm{f}+(\lipwnorm{f}-\e)\displaystyle\sum_{k=2}^N\frac1{k}}{\log|v|}+\e.\end{equation*}
Hence $\lim\limits_{|v|\to\infty}\displaystyle\frac{|f(v)|}{\log|v|}\le \e$. Letting $\e\to 0$, we obtain the result.\end{proof}

The following result will be used in Section~8 to derive estimates on the essential norm of the multiplication operators on $\LipW$.

\begin{proposition}\label{weakconv} Let $\{f_n\}$ be a sequence of functions in \rm{$\LipWo$} converging to $0$ pointwise in $T$ and such that \rm{$\lipwnorm{f_n}$} is bounded. Then $f_n\to 0$ weakly in \rm{$\LipWo$}.
\end{proposition}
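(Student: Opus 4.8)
The plan is to describe the dual space of $\LipWo$ concretely enough that the pointwise hypothesis forces every functional to vanish along $\{f_n\}$. First I would introduce the linear map $S\colon\LipWo\to c_0(V)$, where $V=\{o\}\cup T^*$ is the (countable) vertex set, given by $(Sf)(o)=f(o)$ and $(Sf)(v)=|v|Df(v)$ for $v\in T^*$. Because $T$ is locally finite, the set $\{v\in T^*:|v|\le N\}$ is finite for every $N$, and the defining property of $\LipWo$ then shows that $Sf$ does lie in $c_0(V)$. Moreover $\|Sf\|_\infty\le\lipwnorm f\le 2\|Sf\|_\infty$, so $S$ is injective and bounded below; in particular $S$ is a linear homeomorphism of $\LipWo$ onto its image $Y=S(\LipWo)\subseteq c_0(V)$.

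Next I would use this to represent an arbitrary $\Lambda\in(\LipWo)^*$. Transporting $\Lambda$ to $Y$ via $S^{-1}$ gives a bounded functional on the subspace $Y\subseteq c_0(V)$, which by the Hahn--Banach theorem extends to all of $c_0(V)$; since $c_0(V)^*=\ell^1(V)$, there is $\mu\in\ell^1(V)$ with
\[
\Lambda(f)=\sum_{v\in V}\mu_v\,(Sf)(v)=\mu_o\,f(o)+\sum_{v\in T^*}\mu_v\,|v|Df(v),\qquad f\in\LipWo.
\]
(The sequence $\mu$ need not be unique; one such representation is all we need.) It therefore suffices to prove that $\Lambda(f_n)\to 0$ for each fixed $\mu\in\ell^1(V)$.

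For the last step, put $M=\sup_n\lipwnorm{f_n}<\infty$. Pointwise convergence of $\{f_n\}$ gives $f_n(o)\to 0$ and, for each fixed $v\in T^*$, $|v|Df_n(v)=|v|\,|f_n(v)-f_n(v^-)|\to 0$ as $n\to\infty$, while at the same time $|v|Df_n(v)\le\lipwnorm{f_n}\le M$ uniformly in $n$. So the bounded family $\{(Sf_n)(v)\}_{v\in V}$ converges to $0$ coordinatewise. Given $\e>0$, choose a finite $F\subseteq V$ with $\sum_{v\notin F}|\mu_v|<\e$; then for all sufficiently large $n$,
\[
|\Lambda(f_n)|\le\sum_{v\in F}|\mu_v|\,|(Sf_n)(v)|+M\sum_{v\notin F}|\mu_v|<\e+M\e,
\]
since the finite sum over $F$ tends to $0$. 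Letting $\e\to 0$ yields $\Lambda(f_n)\to 0$, hence $f_n\to 0$ weakly in $\LipWo$.

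I expect the only real subtlety to be the identification of $(\LipWo)^*$ with $\ell^1$-sequences, i.e.\ checking that $S$ lands in $c_0(V)$ and is bounded below and then invoking Hahn--Banach; after that the argument is the standard dominated-convergence estimate for absolutely summable series. One should also record at the outset that $\LipWo$ is a closed subspace of $\LipW$, so that it is itself a Banach space and the phrase ``weak convergence in $\LipWo$'' is unambiguous.
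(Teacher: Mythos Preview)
Your argument is essentially the paper's: embed $\LipWo$ into $c_0$ via the weighted-difference map, identify functionals with $\ell^1$-sequences through $c_0^*\cong\ell^1$, and then split the pairing into a finite piece (killed by pointwise convergence) and a small tail; the only cosmetic differences are that you carry the index $o$ along rather than first reducing to $f_n(o)=0$, and that you invoke Hahn--Banach instead of checking that the map is onto $c_0$ (it is, so Hahn--Banach is not actually needed). One small slip to fix: as written, $(Sf)(v)=|v|Df(v)=|v|\,|f(v)-f(v^-)|$ is not linear in $f$, so your ``linear map $S$'' should instead be $(Sf)(v)=|v|\bigl(f(v)-f(v^-)\bigr)$---the paper commits the same abuse of notation with $f\mapsto \m Df$.
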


\begin{proof} First suppose $f_n(o)=0$ for all $n\in\N$, thus
$\lipwnorm{f_n}=\sup\limits_{v\in T^*}|v|Df_n(v)$. Then, letting $\m(v)=|v|$ for $v\in T$, the sequence $\{\m Df_n\}$ converges to 0 pointwise. Observe that the subspace of $\LipWo$ whose elements send $o$ to 0 is isomorphic to the space $c_0$, consisting of the sequences indexed by $T$ which vanish at infinity, under the supremum norm via the correspondence $f\mapsto \m Df$. The space $c_0$ has dual isomorphic to the space $\ell^1$ of absolutely summable sequences (e.g. \cite{Conway:07}) via the correspondence $g\in \ell^1\mapsto \widetilde{g}\in c_0^*$, where for $f\in c_0$, \begin{equation*}\widetilde{g}(f)=\sum_{v\in T}f(v)g(v).\end{equation*}  Thus, under the identification of $\LipWo$ with $c_0$, if $f_n\in c_0$ converges pointwise to 0 and is bounded in $c_0$, then for any $g\in \ell^1$, we have
\ben |\widetilde{g}(f_n)|=\left|\sum_{v\in T}f_n(v)g(v)\right|\le \sum_{v\in T}|f_n(v)||g(v)|.\label{estgfn}\eeqn
Let $c=\sup\limits_{n\in \N,v\in T}|f_n(v)|$. Fixing any positive integer $N$, we may split the sum on the right-hand side of (\ref{estgfn}) into the two sums \begin{equation*}S_1(n,N)=\sum_{|v|\le N}|f_n(v)||g(v)|\ \hbox{ and }\ S_2(n,N)=\sum_{|v|> N}|f_n(v)||g(v)|.\end{equation*} Since $f_n\to 0$ uniformly on the set $\{v\in T: |v|\le N\}$, we see that
\begin{equation*}S_1(n,N)\le \max_{|v|\le N}|f_n(v)|\|g\|_1\to 0,\hbox{ as }n\to\infty.\end{equation*}
On the other hand, since $g\in \ell^1$, the tail end of the series $\displaystyle\sum_{v\in T}|g(v)|$ approaches 0.
Therefore, since
\begin{equation*}\lim_{n\to\infty}|\widetilde{g}(f_n)|\le \lim_{n\to\infty}S_1(n,N)+\sup_{n\in\N}S_2(n,N)\le c\sum_{|v|>N}|g(v)|,\end{equation*} letting $N\to \infty$, we deduce that $\displaystyle\lim_{n\to\infty}\widetilde{g}(f_n)=0$.

Hence, if $f_n(o)=0$, then $f_n$ converges to 0 weakly. In the general case, define $F_n=f_n-f_n(o)$. By the previous part, $F_n\to 0$ weakly. Since $f_n(o)\to 0$, we conclude that $f_n\to 0$ weakly as well.\end{proof}

Denote by $\chi_A$ the characteristic function of the set $A$ and use the simpler notation $\chi_v$ for the function $\chi_{\{v\}}$.

\begin{proposition}\label{dense} The set \begin{equation*}\mathcal{P}=\left\{\sum\limits_{k=1}^N a_kp_{v_k}: N\in\N, a_k\in\C, v_k\in T, 1\le k\le N\right\},\end{equation*} is dense in \rm{$\LipWo$}, where $p_v=\c_{S_v}$ for $v\in T$.
\end{proposition}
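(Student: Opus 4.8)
The plan is to show that every $f\in\LipWo$ can be approximated in norm by finite linear combinations of the functions $p_v=\c_{S_v}$. The natural candidate for the approximant is a "truncation at level $N$" of $f$. First I would note the telescoping identity: for any vertex $w$, writing $o=u_0,u_1,\dots,u_{|w|}=w$ for the path from the root to $w$, one has $f(w)=f(o)+\sum_{j=1}^{|w|}\bigl(f(u_j)-f(u_{j-1})\bigr)$, and each increment $f(u_j)-f(u_{j-1})$ can be encoded by the jump of $p_{u_j}$ across the edge $[u_{j-1},u_j]$. Concretely, since $p_v$ takes the value $1$ on $S_v$ and $0$ off $S_v$, the function $g_N=f(o)\,\c_T+\sum_{1\le|v|\le N}\bigl(f(v)-f(v^-)\bigr)p_v$ agrees with $f$ at every vertex of length at most $N$, and on a vertex $w$ with $|w|>N$ it equals the value of $f$ at the ancestor of $w$ of length $N$. (Here $\c_T=p_o$, so $g_N$ indeed lies in $\mathcal P$.)

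Next I would estimate $\lipwnorm{f-g_N}$. Write $h_N=f-g_N$. By construction $h_N$ vanishes on all vertices of length $\le N$, so $h_N(o)=0$ and $Dh_N(v)=0$ whenever $|v|\le N$. For a vertex $v$ with $|v|=N+1$ we have $h_N(v)=f(v)-f(v^-)$ while $h_N(v^-)=0$, so $|v|Dh_N(v)=|v|\,|f(v)-f(v^-)|=|v|Df(v)$. For a vertex $v$ with $|v|>N+1$, both $h_N(v)$ and $h_N(v^-)$ are obtained from $f$ by subtracting the common constant (the value of $f$ at the level-$N$ ancestor), so $Dh_N(v)=Df(v)$ and hence $|v|Dh_N(v)=|v|Df(v)$. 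Therefore
\begin{equation*}\lipwnorm{f-g_N}=\sup_{|v|>N}|v|Df(v).\end{equation*}
Since $f\in\LipWo$, the quantity $|v|Df(v)\to 0$ as $|v|\to\infty$, so this supremum tends to $0$ as $N\to\infty$. This shows $g_N\to f$ in $\LipW$, and since each $g_N\in\mathcal P$, the set $\mathcal P$ is dense in $\LipWo$ (note in passing that each $g_N$ is a finite combination of the $p_v$, hence lies in $\LipWo$, which is consistent with $\LipWo$ being closed).

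The only genuinely delicate point is the bookkeeping in the paragraph above: one must check carefully that $g_N$ really does reproduce $f$ exactly on the ball of radius $N$ and is "frozen" beyond it, so that the increments of $h_N$ beyond level $N$ coincide exactly with those of $f$ and the increments at and below level $N$ vanish. Once that identity $\lipwnorm{f-g_N}=\sup_{|v|>N}|v|Df(v)$ is in hand, the conclusion is immediate from the definition of $\LipWo$. I expect this verification — essentially confirming that the partial-sum function behaves like a genuine truncation — to be the main (and only) obstacle, and it is entirely elementary.
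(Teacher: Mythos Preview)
Your proposal is correct and follows essentially the same approach as the paper: both define the truncation of $f$ at level $N$ (your $g_N$ is exactly the paper's $f_n$), verify it lies in $\mathcal P$, and compute $\lipwnorm{f-g_N}=\sup_{|v|>N}|v|Df(v)\to 0$. The only cosmetic difference is that you write the truncation directly as $f(o)p_o+\sum_{1\le|v|\le N}(f(v)-f(v^-))p_v$ via telescoping, whereas the paper first defines the truncation pointwise and then uses the identity $\chi_v=p_v-\sum_{w\in v^+}p_w$ to rewrite it as a combination of the $p_v$.
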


\begin{proof} Fix $v\in T$ and observe that $Dp_v=\c_v$, so that for $w\in T^*$, we have
\begin{equation*}|w|Dp_v(w)=\begin{cases} 0&\quad\hbox{ if }w\ne v,\\
|v|& \quad\hbox{ if }w= v.\end{cases}\end{equation*}
Thus, as $|w|\to\infty$, $|w|Dp_v(w)\to 0$, proving that $p_v\in\LipWo$.

Let $f\in\LipWo$ and for $n\in\N$, define
\begin{equation*}f_n(v)=\begin{cases} f(v)& \quad\hbox{ if }|v|\le n,\\
f(v_n)&\quad\hbox{ if }|v|>n,\end{cases}\end{equation*}
where $v_n$ is the ancestor of $v$ of length $n$. Observe that for $v\in T^*$,
\ben \chi_v=p_v-\sum_{w\in v^+}p_w,\label{chi_v}\eeqn
where $v^+=\{w\in T:w^-=v\}$.
Therefore, for $n\in\N$, we have
\ben f_n&=&\sum_{|v|<n}f(v)\c_v +\sum_{|v|=n}f(v)p_v\nonumber\\
&=&\sum_{|v|<n}f(v)\left(p_v-\sum_{w\in v^+}p_w\right) +\sum_{|v|=n}f(v)p_v\nonumber\\
&=&\sum_{|v|\le n}f(v)p_v-\sum_{|v|<n}f(v)\sum_{w\in v^+}p_w\nonumber\eeqn
Thus, $f_n$ is a finite linear combination of the functions $p_v$ and
\begin{equation*}\lipwnorm{f_n-f}=\sup_{|v|>n}|v|Df(v)\to 0\end{equation*}
as $n\to\infty$, proving the result.\end{proof}

\begin{remark} Since $\Q[i]=\{z\in\C: {\rm{Re}}\, z, {\rm{Im}}\, z\in\Q\}$ is dense in $\C$, and $T$ is countable, the subset of $\mathcal{P}$ consisting of the finite linear combinations of the functions $p_v$ with coefficients in $\Z[i]$ is countable and dense in \rm{$\LipWo$}. Therefore, \rm{$\LipWo$} is a closed separable subspace of \rm{$\LipW$}.
\end{remark}

\section{Cyclic Vectors in the Weighted Little Lipschitz Space}\label{Section:Cyclic}
\begin{definition}\label{cyclic} Let $X$ be a Banach space of functions on $T$ such that $\mathcal{P}$ is dense in $X$. A function $f$ in $X$ is called a {\textit{cyclic vector}} if $X$ is the closure $[f]$ of the functions of the form $p_vf$.
\end{definition}

	If $f\in\LipWo$ vanishes at some vertex $u$, then $f$ cannot be a cyclic vector since the function $\c_u$ cannot be the limit in $\LipWo$ of multiples of $f$. For the converse, we have the following result.

\begin{theorem}\label{cyclic_in_lm0} Let \rm{$f\in\LipWo$} be such that $|f(v)|\ge \d>0$ for all $v\in T$. Then $f$ is a cyclic vector in \rm{$\LipWo$}.
\end{theorem}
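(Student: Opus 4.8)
The plan is to show that, under the hypothesis $|f(v)|\ge\d>0$ for all $v\in T$, every element of the dense set $\mathcal P$ lies in the closure $[f]$; since $[f]$ is closed and $\mathcal P$ is dense in $\LipWo$ by Proposition~\ref{dense}, this forces $[f]=\LipWo$. By linearity it suffices to show $p_u\in[f]$ for each fixed vertex $u\in T$. The natural candidate to approximate $p_u$ is $p_u\cdot g$ where $g$ is (an approximation of) $1/f$ restricted to the sector $S_u$; so the real task is to produce functions of the form $p_v f$ whose linear combinations converge in $\lipwnorm{\cdot}$ to $p_u$.

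First I would observe that $1/f$ makes sense pointwise because $|f|$ is bounded below, and I would check that multiplication by a suitable truncation of $1/f$ behaves well. Concretely, define $h_n$ to agree with $1/f$ on the vertices of $S_u$ of length at most $n$ and to be constant (equal to $1/f$ at the appropriate ancestor) further out, exactly as in the proof of Proposition~\ref{dense}; then $h_n p_u f$ is a finite linear combination of the $p_v f$ (using the expansion of $p_u$ in terms of $\chi_v$'s and then \eqref{chi_v}, i.e. $\chi_v=p_v-\sum_{w\in v^+}p_w$, to re-expand finitely supported pieces), hence lies in $[f]$. So it remains to estimate $\lipwnorm{p_u - h_n p_u f}$ and show it tends to $0$.

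The key computation is the $D$-estimate. Writing $\psi_n = h_n p_u f$, for a vertex $w$ one needs to control $|w|\,D(p_u-\psi_n)(w)$. Outside $S_u$ both $p_u$ and $\psi_n$ vanish, contributing nothing except possibly at the single vertex $u$ itself, which is one fixed vertex and is handled directly. Inside $S_u$ and for $|w|\le n$ one has $\psi_n(w)=(1/f(w))f(w)=1=p_u(w)$, so $D(p_u-\psi_n)(w)=0$. For $|w|>n$ one must bound $|w|\,D\psi_n(w)$, and this is where the weighted Lipschitz norm of $f$ and the lower bound $\d$ enter: since $h_n$ is constant on such long branches, $D\psi_n(w)$ reduces to $|h_n(w)|\,Df(w)=\frac{1}{|f(w_n)|}Df(w)$ where $w_n$ is the ancestor of $w$ of length $n$, hence $|w|\,D\psi_n(w)\le \frac{1}{\d}\,|w|Df(w)\le\frac1\d\sup_{|w|>n}|w|Df(w)$, which goes to $0$ as $n\to\infty$ precisely because $f\in\LipWo$. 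Collecting the pieces gives $\lipwnorm{p_u-\psi_n}\to0$.

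I expect the main obstacle to be bookkeeping rather than a conceptual difficulty: one has to be careful that $h_n$ is chosen so that $h_n p_u f$ genuinely lies in $\mathcal P\cdot f$ (a \emph{finite} linear combination of translates $p_v f$), and that the truncation of $1/f$ is done along sectors in a way compatible with the identity \eqref{chi_v}, so that the algebra of re-expanding finitely supported corrections closes up. A secondary point requiring a little care is that $h_n$ itself need not be bounded a priori — but $|h_n|\le 1/\d$ everywhere, so this is automatic — and that the value $p_u(o)$ versus $\psi_n(o)$ matches (both are $0$ unless $u=o$, and the case $u=o$ is handled by the same argument with $p_o\equiv 1$). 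Once these are pinned down, the norm estimate above finishes the proof.
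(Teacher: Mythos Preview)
Your proof is correct and follows essentially the same strategy as the paper: approximate the target by a truncation of $1/f$ (constant on sectors beyond level $n$, exactly as in Proposition~\ref{dense}) multiplied by $f$, and use $f\in\LipWo$ together with $|f|\ge\d$ to show the tail $\frac{1}{\d}\sup_{|w|>n}|w|Df(w)$ vanishes. The only organizational difference is that the paper first reduces to proving $1\in[f]$ (using that $p_vp_u\in\{0,p_v,p_u\}$, so $1\in[f]$ forces every $p_u\in[f]$) and then runs the approximation once, whereas you carry out the same approximation directly for each $p_u$; the core estimate is identical.
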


\begin{proof} First observe that to prove the result, it suffices to show that the constant function 1 is a limit in $\LipWo$ of functions of the form $p_vf$. Indeed, observe that if $v,u\in T$, then
\begin{equation*}p_vp_u=\begin{cases}0& \quad\hbox{ if }S_v\cap S_u=\emptyset,\\
p_v& \quad\hbox{ if }u \hbox{ is an ancestor of }v,\\
p_u& \quad\hbox{ if }v \hbox{ is an ancestor of }u \hbox{ or }v=u.\end{cases}\end{equation*} Thus, $1\in [f]$ implies that $p_v\in [f]$ for all $v\in T$. By Proposition~\ref{dense}, it follows that $f$ is a cyclic vector in $\LipWo$.

For $n\in\N$, define $f_n$ as in the proof of Proposition~\ref{dense}. Then
\ben\lipwnorm{\frac{f}{f_n}-1}=\sup_{|v|>n}{|v|}\frac{Df(v)}{|f(v_n)|}\le \frac1{\d}\sup_{|v|>n}|v|Df(v)\to 0\nonumber\eeqn
as $n\to\infty$. On the other hand,
letting \begin{equation*}g_{n}=\sum_{|v|\le n^2}\frac1{f_n(v)}\c_v,\end{equation*} we see that
\begin{equation*}\lipwnorm{g_nf-\frac{f}{f_n}}=\sup_{|v|>n^2}\frac{|v|Df(v)}{f(v_n)}\le \frac1{\d}\sup_{|v|>n^2}|v|Df(v)\to 0\end{equation*}
as $n\to\infty$. Thus, $\lipwnorm{g_nf-1}\to 0$ as $n\to\infty$. For $v\in T^*$, recalling (\ref{chi_v}), we see that the functions $g_nf$ belong to $[f]$. Therefore, $f$ is a cyclic vector.\end{proof}

It is still an open question as to whether there exist cyclic vectors that are not bounded away from 0.

\section{Boundedness of $M_\psi$}\label{Section:Bound}
In this section, we characterize the bounded multiplication operators acting on $\LipW$ and $\LipWo$.  This characterization provides a big-oh criterion for boundedness, which corresponds to a little-oh criterion for compactness developed in Section \ref{Section:Compact}.

\begin{theorem}\label{boundedness} For a function $\psi$ on $T$ the following statements are equivalent.
\begin{enumerate}
\item[\normalfont{(a)}] $M_\psi$ is bounded on \rm{$\LipW$}.

\item[\normalfont{(b)}] $M_\psi$ is bounded on \rm{$\LipWo$}.

\item[\normalfont{(c)}] $\psi\in L^\infty$ and $\displaystyle\sup\limits_{v\in T^*}|v|\log|v|D\psi(v)<\infty.$
\end{enumerate}
\end{theorem}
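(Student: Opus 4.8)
The plan is to establish the cycle of implications (c)~$\Rightarrow$~(a), (c)~$\Rightarrow$~(b), (a)~$\Rightarrow$~(c), and (b)~$\Rightarrow$~(c). Since statement (b) presupposes that $M_\psi$ carries $\LipWo$ into itself, and $\LipWo$ is a functional Banach space (being a closed subspace of $\LipW$, by Corollary~\ref{funct_Banach} and the remark following Proposition~\ref{dense}), Lemma~\ref{drs} shows that in both (a) and (b) the symbol $\psi$ is automatically bounded with $\|\psi\|_\infty\le\|M_\psi\|$; I will use this throughout. All the estimates rest on the elementary ``Leibniz'' inequality
\[
D(\psi f)(v)=|\psi(v)f(v)-\psi(v^-)f(v^-)|\le|\psi(v)|\,Df(v)+|f(v^-)|\,D\psi(v),\qquad v\in T^*,
\]
obtained by adding and subtracting $\psi(v)f(v^-)$.

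For (c)~$\Rightarrow$~(a) I would bound $\lipwnorm{M_\psi f}$ directly. The contribution $|v||\psi(v)|\,Df(v)$ is at most $\|\psi\|_\infty\lipwnorm{f}$; the contribution $|v||f(v^-)|\,D\psi(v)$ is controlled by Proposition~\ref{modulus_est}, since for $|v|\ge2$ one has $|f(v^-)|\le(1+\log(|v|-1))\lipwnorm{f}\le C_0(\log|v|)\lipwnorm{f}$ for an absolute constant $C_0$, so that this term is at most $C_0\lipwnorm{f}\,\sup_{u}|u|\log|u|\,D\psi(u)$, while the finitely many vertices with $|v|=1$ are handled by $D\psi(v)\le2\|\psi\|_\infty$. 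Together with $|(\psi f)(o)|\le\|\psi\|_\infty|f(o)|$ this gives $\psi f\in\LipW$ with a norm estimate, hence $M_\psi$ is bounded on $\LipW$. For (c)~$\Rightarrow$~(b) the same two inequalities show that if $f\in\LipWo$ then $|v|D(\psi f)(v)\to0$: the first summand vanishes in the limit because $|v|Df(v)\to0$ and $\psi$ is bounded, and the second because $|v|\log|v|\,D\psi(v)$ is bounded while $|f(v^-)|/\log(|v|-1)\to0$ by Proposition~\ref{lm0}. Thus $M_\psi$ maps $\LipWo$ into itself, and is therefore bounded there.

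For necessity I would test $M_\psi$ on explicit functions. Fix $w\in T^*$ with $|w|=n\ge2$, let $o=w_0,w_1,\dots,w_n=w$ be the geodesic from $o$ to $w$, and put $f_w=\sum_{j=1}^n\frac1j\,p_{w_j}$. Since $Dp_v=\chi_v$, we get $|v|Df_w(v)=1$ when $v=w_j$ for some $1\le j\le n$ and $=0$ otherwise, and $f_w(o)=0$; hence $f_w\in\LipWo$ with $\lipwnorm{f_w}=1$, making it an admissible test function for both (a) and (b). Moreover $f_w(w)=H_n$ and $f_w(w^-)=H_{n-1}$, where $H_m=\sum_{k=1}^m 1/k$, so
\[
|w|\,D(\psi f_w)(w)=n\,|\psi(w)H_n-\psi(w^-)H_{n-1}|\ge n\Big(H_{n-1}\,D\psi(w)-\tfrac1n|\psi(w)|\Big)\ge nH_{n-1}\,D\psi(w)-\|\psi\|_\infty.
\]
Since the left-hand side is at most $\lipwnorm{M_\psi f_w}\le\|M_\psi\|$ and $H_{n-1}\ge\log n$, this yields $|w|\log|w|\,D\psi(w)\le\|M_\psi\|+\|\psi\|_\infty$ for every $w$ with $|w|\ge2$ (the case $|w|=1$ being trivial since $\log1=0$). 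Taking the supremum over $w$ gives the second condition in (c), while $\psi\in L^\infty$ was already recorded; this proves both (a)~$\Rightarrow$~(c) and (b)~$\Rightarrow$~(c), closing the equivalences.

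The only genuinely creative step is the construction of the test function $f_w$; the rest is bookkeeping with the Leibniz inequality, Proposition~\ref{modulus_est}, and Proposition~\ref{lm0}. The point to watch is that the weight $|v|\log|v|$ in (c) — rather than merely $|v|$, as for the unweighted Lipschitz space — is precisely dictated by the logarithmic growth rate $|f(v)|=O(\log|v|)$ of functions in $\LipW$, which is why the harmonic‑number profile $\sum 1/j$ (whose partial sums grow like $\log$) is the correct shape for the extremal symbol probe.
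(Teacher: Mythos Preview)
Your proof is correct. The sufficiency directions (c)~$\Rightarrow$~(a) and (c)~$\Rightarrow$~(b) follow the same Leibniz--Proposition~\ref{modulus_est}--Proposition~\ref{lm0} pattern as the paper, only with the roles of $f(v)$ and $f(v^-)$ swapped in the product rule (both splittings are valid).

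The necessity direction is where your argument genuinely differs. The paper treats (a)~$\Rightarrow$~(c) and (b)~$\Rightarrow$~(c) separately: for (a) it tests on the single function $f(v)=\log|v|$, which lies in $\LipW$ but not in $\LipWo$; for (b) it instead tests on the family $f_\alpha(v)=(\log|v|)^\alpha$, $0<\alpha<1$, which do lie in $\LipWo$ with uniformly bounded norm, and then lets $\alpha\uparrow 1$. Your construction $f_w=\sum_{j=1}^{|w|}\tfrac1j\,p_{w_j}$ is a truncated, path-localized harmonic-number profile that sits in $\LipWo$ with \emph{exact} norm $1$, and a single evaluation at $v=w$ delivers the bound $|w|\log|w|\,D\psi(w)\le\|M_\psi\|+\|\psi\|_\infty$ directly. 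This handles both implications at once, avoids the limiting argument in $\alpha$, and even yields a sharper explicit constant. The paper's choice has the advantage of using globally defined, vertex-independent test functions (so one reads off the supremum in one line), whereas your $f_w$ must be rebuilt for each target vertex; but your version is more economical overall and makes the role of $\LipWo$ cleaner.
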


\begin{proof} We first prove (a)$\Longrightarrow$(c).  Assume $M_\psi$ is bounded on $\LipW$. The boundedness of $\psi$ follows immediately from Corollary~\ref{funct_Banach}.

For $v\in T$, define \begin{equation*}f(v)=\begin{cases}\ \ 0 &\quad\hbox{ if }v=o,\\
\log|v|&\quad\hbox{ if }v\in T^*.\end{cases}\end{equation*}  Then, for $|v|=1$, we have $Df(v)=0$, while for $|v|\ge 2$, by Lemma~\ref{easy}, we obtain \begin{equation*}Df(v)=\log\left(\frac{|v|}{|v|-1}\right)\le \frac1{|v|-1}.\end{equation*}
Thus, $|v|Df(v)\le \displaystyle\frac{|v|}{|v|-1}\le 2.$ Therefore, $f\in\LipW$. Furthermore, for $v\in T^*$ we have
\ben D\psi(v)|f(v)|&\le & |\psi(v)f(v)-\psi(v^-)f(v^-)|+|\psi(v^-)f(v^-)-\psi(v^-)f(v)|\nonumber\\
&= & D(\psi f)(v)+|\psi(v^-)|Df(v).\label{prodrule2}\eeqn
Thus, by the boundedness of $M_\psi$, for $v\in T^*$, we obtain
\ben |v|D\psi(v)|f(v)|&\le &|v|D(\psi f)(v)+|\psi(v^-)||v|Df(v)\nonumber\\
&\le& \lipwnorm{M_\psi f}+\|\psi\|_\infty\lipwnorm{f}.\nonumber\eeqn
Hence \ben \sup_{v\in T^*}|v|\log|v|D\psi(v)<\infty.\label{modvlogv}\eeqn

Next, we prove (c)$\Longrightarrow$(a).  Assume $\psi$ is bounded and (\ref{modvlogv}) holds. Let $f\in\LipW$ and $v\in T^*$. Note that
\ben D(\psi f)(v)&\le & |\psi(v)f(v)-\psi(v^-)f(v)|+|\psi(v^-)f(v)-\psi(v^-)f(v^-)|\nonumber\\
&=& D\psi(v)|f(v)|+|\psi(v^-)|Df(v).\label{prodrule}\eeqn
Thus, by Proposition~\ref{modulus_est}, we have
\ben |v|D(\psi f)(v)&\le & |v|D\psi(v)|f(v)|+|\psi(v^-)||v|Df(v)\nonumber\\
&\le &|v|(1+\log|v|)D\psi(v)\lipwnorm{f} +\|\psi\|_\infty\lipwnorm{f}.\nonumber\eeqn
In particular, for $|v|\ge 3$, we have
\begin{equation*} |v|D(\psi f)(v)\le (2|v|\log|v|D\psi(v)+\|\psi\|_\infty)\lipwnorm{f},\end{equation*}
proving that $\psi f\in \LipW$. The boundedness of $M_\psi$ follows from Lemma~\ref{drs}.

Now, we prove (b)$\Longrightarrow$(c).  Assume $M_\psi$ is bounded on $\LipWo$. For $0<\a<1$, define \begin{equation*}f_\a(v)=\begin{cases} 0 &\hbox{ if } v=o,\\(\log|v|)^\a &\hbox{ if }v\in T^*.\end{cases}\end{equation*} Then $|v|Df_\a(v)\to 0$ as $|v|\to\infty$, so that $f_\a\in\LipWo$. Since for $0<\a<1$, the function $x\mapsto x-x^\a$ is increasing for $x\ge 1$, it follows that for $|v|\ge 2$, $Df_\a(v)\le \log(|v|)-\log(|v|-1)$, so by Lemma~\ref{easy}, we have
\begin{equation*}|v|Df_\a(v)\le |v|(\log|v|-\log(|v|-1))\le \frac{|v|}{|v|-1}\le 2.\end{equation*} Furthermore, for $|v|=1$, $|v|Df_\a(v)=0$. Thus, $\lipwnorm{f_\a}\le 2$ for all $\a\in (0,1)$. Moreover, by Lemma~\ref{drs}, the function $\psi$ is bounded, so by (\ref{prodrule2}), for $v\in T^*$, we have
\ben |v|D\psi(v)|f_\a(v)|&\le& |v|D(\psi f_\a)(v)+|v||\psi(v^-)|D f_\a(v)\nonumber\\
&\le& \lipwnorm{M_\psi f_\a}+\|\psi\|_\infty\lipwnorm{f_\a}.\nonumber\eeqn
This implies that \begin{equation*}|v|D\psi(v)(\log|v|)^\a\le (\|M_\psi\|+\|\psi\|_\infty)\lipwnorm{f_\a}.\end{equation*}
Letting $\a$ approach 1, by the boundedness of $\lipwnorm{f_\a}$, we obtain (\ref{modvlogv}).

Lastly, we prove (c)$\Longrightarrow$(b).  Assume (c) holds and let $f\in\LipWo$. By (\ref{prodrule}) and Proposition~\ref{lm0}, for $|v|>1$ we have
\ben |v|D(\psi f)(v)&\le &|v|D\psi(v)|f(v)|+|v||\psi(v^-)|Df(v)\nonumber\\
&\le &|v|\log|v|D\psi(v)\frac{|f(v)|}{\log|v|}+\|\psi\|_\infty|v|Df(v)\to 0\nonumber\eeqn
as $|v|\to\infty$. Therefore, $\psi f\in\LipWo$. The boundedness of $M_\psi$ on $\LipWo$ follows from Lemma~\ref{drs}.\end{proof}

\section{Norm of $M_\psi$}\label{Section:Norm}
In this section, we provide estimates on the norm of the bounded multiplication operators on $\LipW$ and $\LipWo$.

\begin{theorem}\label{norm_estimates}
Let $M_\psi$ be a bounded multiplication operator on \rm{$\LipW$} or \rm{$\LipWo$}.  Then
\begin{equation*}\max\{\lipwnorm{\psi},\|\psi\|_\infty\}\le \|M_\psi\| \leq \|\psi\|_\infty + \displaystyle\sup_{v \in T^*} |v|(1+\log|v|)D\psi(v).\end{equation*}
\end{theorem}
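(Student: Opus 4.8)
The plan is to prove the two inequalities separately, each by a short computation relying on results already in place. For the lower bound, the inequality $\|\psi\|_\infty\le\|M_\psi\|$ is exactly Corollary~\ref{funct_Banach}, so only $\lipwnorm{\psi}\le\|M_\psi\|$ needs an argument. I would get it by testing $M_\psi$ on the constant function $1$: since $D1\equiv 0$ we have $1\in\LipWo\subseteq\LipW$ with $\lipwnorm{1}=1$, and $M_\psi 1=\psi$, so $\lipwnorm{\psi}=\lipwnorm{M_\psi 1}\le\|M_\psi\|\,\lipwnorm{1}=\|M_\psi\|$. As $\LipWo$ carries the restriction of the $\LipW$-norm and contains the constant functions, this works the same whether $M_\psi$ is viewed on $\LipW$ or on $\LipWo$.

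For the upper bound, fix $f$ in the space under consideration and let $v\in T^*$. The ingredients are the product-rule inequality (\ref{prodrule}), namely $D(\psi f)(v)\le D\psi(v)|f(v)|+|\psi(v^-)|Df(v)$; the modulus estimate $|f(v)|\le(1+\log|v|)\lipwnorm{f}$ of Proposition~\ref{modulus_est}; the bound $|\psi(v^-)|\le\|\psi\|_\infty$; and---the point that yields the sharp constant---the estimate $|v|Df(v)\le\lipwnorm{f}-|f(o)|$, immediate from the definition of $\lipwnorm{\cdot}$. Multiplying (\ref{prodrule}) by $|v|$ and substituting these bounds gives
\[
|v|D(\psi f)(v)\le|v|(1+\log|v|)D\psi(v)\,\lipwnorm{f}+\|\psi\|_\infty\bigl(\lipwnorm{f}-|f(o)|\bigr).
\]
Taking the supremum over $v\in T^*$ and then adding $|\psi(o)f(o)|\le\|\psi\|_\infty|f(o)|$, the two contributions $\|\psi\|_\infty|f(o)|$ cancel, leaving
\[
\lipwnorm{\psi f}\le\Bigl(\|\psi\|_\infty+\sup_{v\in T^*}|v|(1+\log|v|)D\psi(v)\Bigr)\lipwnorm{f}.
\]
Hence $\|M_\psi\|\le\|\psi\|_\infty+\sup_{v\in T^*}|v|(1+\log|v|)D\psi(v)$, and the same computation goes through verbatim for $f\in\LipWo$.

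I do not expect a genuine obstacle; the only delicate point is the bookkeeping in the last step. One must split $\lipwnorm{\psi f}$ as $|\psi(o)f(o)|+\sup_{v\in T^*}|v|D(\psi f)(v)$ and use the refined bound $|v|Df(v)\le\lipwnorm{f}-|f(o)|$ in place of the coarser $|v|Df(v)\le\lipwnorm{f}$; with the coarser bound the $|f(o)|$ terms would fail to cancel and one would only reach $2\|\psi\|_\infty$ instead of $\|\psi\|_\infty$ in the estimate.
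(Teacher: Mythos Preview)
Your proposal is correct and follows essentially the same approach as the paper: the lower bound by testing on the constant $1$ together with Corollary~\ref{funct_Banach}, and the upper bound via the product-rule inequality~(\ref{prodrule}), Proposition~\ref{modulus_est}, and the key identity $\sup_{v\in T^*}|v|Df(v)=\lipwnorm{f}-|f(o)|$ to produce the cancellation of the $|f(o)|$ terms. The only cosmetic difference is that the paper normalizes to $\lipwnorm{f}=1$ before carrying out the upper-bound computation, whereas you keep a general $f$; the substance is identical.
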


\begin{proof}
Let $f$ be the function identically equal to 1 on $T$. Since $Df$ is identically 0, $f \in \LipW$, and $\lipwnorm{f}=1$. Thus, $\psi=\psi f\in\LipW$ and $\lipwnorm{M_\psi f} = \lipwnorm{\psi}$. Therefore, $\lipwnorm{\psi} \leq \|M_\psi\|$. Moreover, by Lemma~\ref{drs}, $\psi\in L^\infty$ and $\|\psi\|_\infty \leq \|M_\psi\|$, proving the lower estimate.

Let $f \in \LipW$ such that $\lipwnorm{f} = 1$.  Then, using (\ref{prodrule}), Proposition~\ref{modulus_est}, and the fact that $\sup\limits_{v\in T^*}|v|Df(v)=1-|f(o)|$, we obtain
\begin{equation*}\begin{aligned}
\|M_\psi f\|
&\le |\psi(o)||f(o)| + \sup_{v \in T^*}|v||f(v)|D\psi(v) + \sup_{v \in T^*}|v||\psi(v^-)|Df(v)\\&\leq |\psi(o)||f(o)| + \sup_{v \in T^*}|v|(1 + \log|v|)D\psi(v) + \|\psi\|_\infty\sup_{v \in T^*}|v|Df(v)\\
&= |\psi(o)||f(o)| + \sup_{v \in T^*}|v|(1+\log|v|)D\psi(v)+ \|\psi\|_\infty(1-|f(o)|)\\
&\le \|\psi\|_\infty + \sup_{v \in T^*}|v|(1+\log|v|)D\psi(v),\end{aligned}\end{equation*}
proving the upper estimate.\end{proof}

\section{Spectrum of $M_\psi$}\label{Section:Spectrum}
In this section, we study the spectra of the bounded multiplication operator $M_\psi$ on $\LipW$ and $\LipWo$.  We show that the point spectrum is nonempty and, in fact, it is a dense subset of the spectrum. We also show that the spectrum and the approximate point spectrum are equal to the closure of the range of the symbol. We deduce a characterization of the bounded multiplications operators that are bounded below.

Recall that for a bounded operator $S$ on a Banach space $X$, the \emph{spectrum} of $S$ is defined as
\begin{equation*}\sigma(S) = \left\{\lambda \in \C : S - \lambda I \text{ is not invertible}\right\},\end{equation*} where $I$ is the identity operator on $X$. The \emph{point spectrum} of $S$ is defined as
\begin{equation*}\sigma_p(S) = \left\{\lambda \in \C : \mathrm{ker}(S-\lambda I) \neq \{0\}\right\}.\end{equation*}
The \emph{approximate point spectrum} of $S$ is defined as
\small \begin{equation*}\sigma_{ap}(S) = \left\{\lambda \in \C : \exists \{x_n\} \subseteq X, \text{ such that }\, \|x_n\|=1\  \forall n, \text{ and }\, \|(S-\lambda I)x_n\| \to 0\right\}.\end{equation*}
\normalsize

The following inclusions hold:
\ben \s_p(S)\subseteq \s_{ap}(S)\subseteq\s(S).\label{spectra}\eeqn

\begin{theorem}\label{spectrum} Let $M_\psi$ be a bounded multiplication operator on $\LipW$ or $\LipWo$. Then
\begin{enumerate}
\item[\normalfont{(a)}] $\sigma_p(M_\psi) = \psi(T)$.

\item[\normalfont{(a)}] $\s(M_\psi)=\overline{\psi(T)}$.
\end{enumerate}
\end{theorem}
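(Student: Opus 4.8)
The plan is to prove part (a) — the identification of the point spectrum — by a direct computation, and then to leverage it together with the general inclusions \eqref{spectra} to obtain part (b). For (a), fix $\lambda \in \C$ and observe that $M_\psi f = \lambda f$ holds for $f \in \LipW$ (or $\LipWo$) if and only if $(\psi(v) - \lambda) f(v) = 0$ for every $v \in T$. If $\lambda = \psi(u)$ for some $u \in T$, I would exhibit a nonzero eigenfunction: the natural candidate is $\chi_u$ when $u \ne o$ (note $\chi_u = p_u - \sum_{w \in u^+} p_w \in \LipWo$ by Proposition~\ref{dense} and its proof), or the constant function $1$ (which lies in $\LipWo$ trivially, being constant) when $u = o$; either way one checks $M_\psi$ applied to this function equals $\psi(u)$ times it, since the function is supported where $\psi$ takes the value $\psi(u)$ only if... — actually this requires care, so instead I would use the following cleaner choice. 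Pick a vertex $u$ with $\psi(u) = \lambda$; if there is no such vertex then $\psi - \lambda$ is nowhere zero and $M_\psi - \lambda I = M_{\psi - \lambda}$ is injective, so $\lambda \notin \sigma_p(M_\psi)$. Conversely, if $\psi(u) = \lambda$, the indicator $\chi_u \in \LipWo \subseteq \LipW$ is a nonzero function annihilated by $M_{\psi - \lambda}$ at every vertex except possibly $u$, and at $u$ we have $(\psi(u) - \lambda)\chi_u(u) = 0$ as well; hence $\chi_u \in \ker(M_\psi - \lambda I)$ and $\lambda \in \sigma_p(M_\psi)$. This gives $\sigma_p(M_\psi) = \psi(T)$ on both spaces.

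For part (b), the inclusion $\sigma(M_\psi) \subseteq \overline{\psi(T)}$ follows from the standard fact that if $\lambda \notin \overline{\psi(T)}$ then $1/(\psi - \lambda)$ is a bounded function on $T$ with $\sup_{v} |1/(\psi(v) - \lambda)| \le 1/\mathrm{dist}(\lambda, \overline{\psi(T)})$, and one checks it satisfies the boundedness criterion of Theorem~\ref{boundedness}(c): the bound $D(1/(\psi - \lambda))(v) \le D\psi(v)/\mathrm{dist}(\lambda,\overline{\psi(T)})^2$ shows $\sup_v |v|\log|v|\, D(1/(\psi-\lambda))(v) < \infty$, so $M_{1/(\psi - \lambda)}$ is a bounded multiplication operator serving as the inverse of $M_\psi - \lambda I = M_{\psi - \lambda}$. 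For the reverse inclusion, combine part (a) with \eqref{spectra}: $\psi(T) = \sigma_p(M_\psi) \subseteq \sigma(M_\psi)$, and since the spectrum of a bounded operator is closed, $\overline{\psi(T)} \subseteq \sigma(M_\psi)$.

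The main obstacle I anticipate is the verification in the $\overline{\psi(T)} \subseteq \sigma(M_\psi)$ direction only if one tried to argue without the point-spectrum computation; routing through part (a) sidesteps it entirely, so the only genuine work is the Lipschitz-type estimate on $1/(\psi - \lambda)$ needed for the easy inclusion, which is a short calculation using $|(\psi(v)-\lambda) - (\psi(v^-)-\lambda)| = D\psi(v)$ and the uniform lower bound on $|\psi - \lambda|$. I would also remark that the same reciprocal-symbol argument, restricted to $\LipWo$, requires checking $1/(\psi - \lambda)$ lies in $L^\infty$ and satisfies the big-oh condition — which it does since $D\psi(v)$ itself need not go to zero but the criterion in Theorem~\ref{boundedness}(c) is the same big-oh condition for both spaces, so boundedness of $M_{1/(\psi-\lambda)}$ on $\LipWo$ follows identically.
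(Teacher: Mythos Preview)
Your proposal is correct and follows essentially the same approach as the paper: part (a) is established by using $\chi_u$ as an eigenfunction when $\psi(u)=\lambda$ and injectivity of $M_{\psi-\lambda}$ when $\lambda\notin\psi(T)$, and part (b) is obtained from the closedness of the spectrum together with the reciprocal-symbol estimate $D\big((\psi-\lambda)^{-1}\big)(v)\le c^{-2}D\psi(v)$ to invoke Theorem~\ref{boundedness}(c). The only cosmetic difference is that the paper handles $\chi_w$ uniformly for all $w\in T$ without separating the case $w=o$, which your cleaner second pass also does.
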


\begin{proof} To prove (a), suppose $\lambda \in \sigma_p(M_\psi)$.  Then there exists a non-zero function $f \in \LipWo$ such that $M_{\psi-\lambda}f$ is identically zero.  Since $f$ is not identically zero, there exists $w \in T$ such that $f(w) \neq 0$.  Then $0=(M_{\psi-\lambda}f)(w) = (\psi(w)-\lambda)f(w)$, and so $\psi(w) = \lambda$, proving that $\lambda$ is in the image of $\psi$.

Conversely, suppose $\lambda$ is in the image of $\psi$.  Then there exists $w \in T$ such that $\psi(w) = \lambda$.  So we see that $M_{\psi-\lambda}\chi_{w}$ is identically zero.  Thus $\lambda \in \sigma_p(M_\psi)$.  Therefore $\sigma_p(M_\psi)=\psi(T)$.

To prove (b), observe that since the spectrum is closed, the inclusion $\overline{\psi(T)}\subseteq \s(M_\psi)$ follows at once from part (a) by passing to the closure.

Conversely, if $\l\notin \overline{\psi(T)}$, then $|\psi(v)-\l|\ge c\,$ for some positive constant $c$ and all $v\in T$. Thus, the function $g=(\psi-\l)^{-1}$ is bounded on $T$. Furthermore,
\ben \sup_{v\in T^*}|v|\log|v|Dg(v)&=&\sup_{v\in T^*}|v|\log|v|\left|\frac1{\psi(v)-\l}-\frac1{\psi(v^-)-\l}\right|\nonumber\\
&\le& \frac1{c^2}\sup_{v\in T^*}|v|\log|v|D\psi(v)<\infty.\nonumber\eeqn
By Theorem~\ref{boundedness}, we deduce that $M_g=M_{(\psi-\l)^{-1}}$ is a bounded operator on $\LipW$ or $\LipWo$, which implies that $\l\notin \s(M_\psi)$. Therefore $\s(M_\psi)=\overline{\psi(T)}$.\end{proof}

The following proposition relates the boundary of the spectrum to the approximate point spectrum.

\begin{proposition}[Proposition 6.7 of \cite{Conway:07}]\label{boundary spectrum} If $S$ is a bounded operator on a Banach space, then the boundary of $\s(S)$ is a subset of $\s_{ap}(S)$. \end{proposition}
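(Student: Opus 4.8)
The statement to prove is Proposition~\ref{boundary spectrum}: the boundary of $\sigma(S)$ is contained in $\sigma_{ap}(S)$ for any bounded operator $S$ on a Banach space. Since this is attributed to Conway's book, the plan is to reproduce the standard functional-analytic argument. The key idea is to show that every point in the \emph{topological boundary} of the spectrum is an approximate eigenvalue, equivalently, that $S - \lambda I$ fails to be bounded below whenever $\lambda \in \partial\sigma(S)$.

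First I would recall the basic dichotomy: a bounded operator $T$ on a Banach space is invertible if and only if $T$ is bounded below (i.e.\ $\|Tx\| \ge c\|x\|$ for some $c > 0$ and all $x$) \emph{and} has dense range. Consequently, if $\lambda \notin \sigma_{ap}(S)$, then $S - \lambda I$ is bounded below. The main step is then to argue that if $\lambda \in \partial\sigma(S)$, then $S - \lambda I$ cannot be bounded below. To see this, pick a sequence $\lambda_n \to \lambda$ with each $\lambda_n$ in the resolvent set $\C \setminus \sigma(S)$ (possible since $\lambda$ is on the boundary of $\sigma(S)$, hence a limit of points outside it). A standard estimate shows that as $\lambda_n$ approaches $\sigma(S)$, the resolvent norm blows up: $\|(S - \lambda_n I)^{-1}\| \to \infty$. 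Indeed, if $\|(S-\lambda_n I)^{-1}\|$ stayed bounded along a subsequence by some $M$, then for $|\mu - \lambda_n|$ small (uniformly, less than $1/M$) the operator $S - \mu I$ would be invertible via the Neumann series, forcing a fixed-radius resolvent disk around each $\lambda_n$ and hence placing $\lambda$ in the resolvent set, a contradiction.

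Having $\|(S - \lambda_n I)^{-1}\| \to \infty$, choose unit vectors $y_n$ with $\|(S - \lambda_n I)^{-1} y_n\| \ge \tfrac12 \|(S-\lambda_n I)^{-1}\| =: r_n \to \infty$, and set $x_n = (S - \lambda_n I)^{-1} y_n / \|(S - \lambda_n I)^{-1} y_n\|$, a unit vector. Then
\[
\|(S - \lambda I)x_n\| \le \|(S - \lambda_n I)x_n\| + |\lambda_n - \lambda|\,\|x_n\| \le \frac{1}{r_n} + |\lambda_n - \lambda| \to 0,
\]
which exhibits $\lambda \in \sigma_{ap}(S)$. This completes the argument.

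The only genuinely delicate point is the resolvent-norm blow-up near the spectrum; everything else is bookkeeping with the triangle inequality and the Neumann series. Since the result is quoted from \cite{Conway:07}, an acceptable alternative is simply to cite it; but if a self-contained proof is desired, the argument above is the one to write out, with the Neumann-series lemma stated explicitly as the crux.
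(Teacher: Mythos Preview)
Your argument is correct and is exactly the standard proof one finds in Conway: show that the resolvent norm blows up as one approaches the spectrum via a Neumann-series perturbation argument, then manufacture approximate eigenvectors from near-norming vectors for the resolvent. There is nothing to compare against, however, because the paper does not supply its own proof of this proposition; it is stated with attribution to \cite{Conway:07} and used as a black box, which you yourself anticipated as an acceptable alternative.
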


	Using Theorem~\ref{spectrum}, the inclusions (\ref{spectra}) and Proposition~\ref{boundary spectrum}, we obtain the following result.

\begin{corollary}\label{approximate point spectrum} Let $M_\psi$ be a bounded multiplication operator on \rm{$\LipW$} or \rm{$\LipWo$}. Then $\sigma_{ap}(M_\psi)=\overline{\psi(T)}$.
\end{corollary}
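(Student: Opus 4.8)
The plan is to deduce $\sigma_{ap}(M_\psi)=\overline{\psi(T)}$ by sandwiching the approximate point spectrum between two sets already known to coincide. By the inclusions (\ref{spectra}), we have $\sigma_p(M_\psi)\subseteq\sigma_{ap}(M_\psi)\subseteq\sigma(M_\psi)$. Theorem~\ref{spectrum}(b) gives $\sigma(M_\psi)=\overline{\psi(T)}$, so immediately $\sigma_{ap}(M_\psi)\subseteq\overline{\psi(T)}$. For the reverse inclusion, I would combine two facts: first, Theorem~\ref{spectrum}(a) gives $\psi(T)=\sigma_p(M_\psi)\subseteq\sigma_{ap}(M_\psi)$; second, Proposition~\ref{boundary spectrum} gives $\partial\sigma(M_\psi)\subseteq\sigma_{ap}(M_\psi)$, i.e. $\partial\overline{\psi(T)}\subseteq\sigma_{ap}(M_\psi)$.

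It then remains to observe that $\psi(T)\cup\partial\overline{\psi(T)}$ is dense in $\overline{\psi(T)}$ — indeed it already contains $\psi(T)$, whose closure is $\overline{\psi(T)}$ — so that $\overline{\psi(T)}\subseteq\overline{\sigma_{ap}(M_\psi)}$. Since $\sigma_{ap}(M_\psi)$ is closed (the approximate point spectrum of a bounded operator is always closed), this yields $\overline{\psi(T)}\subseteq\sigma_{ap}(M_\psi)$, completing the two-sided inclusion. In fact one can streamline further: $\psi(T)\subseteq\sigma_{ap}(M_\psi)$ alone, upon taking closures and using that $\sigma_{ap}$ is closed, already gives $\overline{\psi(T)}\subseteq\sigma_{ap}(M_\psi)$, so the appeal to Proposition~\ref{boundary spectrum} is not strictly needed; but including it makes the argument robust even if one prefers not to invoke closedness of $\sigma_{ap}$ explicitly, since $\partial\sigma(M_\psi)$ together with the interior points of $\sigma(M_\psi)$ that lie in $\psi(T)$ may be arranged to exhaust a dense subset.

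There is essentially no obstacle here: the corollary is a formal consequence of the preceding results, and the only point requiring a moment's care is the (standard) fact that $\sigma_{ap}(S)$ is a closed subset of $\C$ for any bounded operator $S$, which justifies passing from density to genuine containment. The proof is therefore a short chain of inclusions and a closure argument, with all the substantive work already carried out in Theorem~\ref{spectrum}.
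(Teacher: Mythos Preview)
Your argument is correct and matches the paper's approach, which simply cites Theorem~\ref{spectrum}, the inclusions~(\ref{spectra}), and Proposition~\ref{boundary spectrum} without further elaboration; your streamlined version via the closedness of $\sigma_{ap}$ is the cleanest route. One minor caveat: contrary to your aside, Proposition~\ref{boundary spectrum} alone does not let you bypass the closedness of $\sigma_{ap}$, since $\psi(T)\cup\partial\,\overline{\psi(T)}$ can be a proper (though dense) subset of $\overline{\psi(T)}$ when $\psi(T)$ is countable---so a closure step is unavoidable, and your main argument already handles it correctly.
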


A bounded operator $S$ on a Banach space $X$ is said to be \emph{bounded below} if there exists a positive constant $c$ such that $\|Sx\| \geq c\|x\|$ for all $x \in X$.  Note that a bounded operator that is bounded below is necessarily injective.

	The following result connects the approximate point spectrum and the operators that are bounded below.

\begin{proposition}[Proposition 6.4 of \cite{Conway:07}]\label{conway approximate point spectrum} If $S$ is a bounded operator on a Banach space, then $\lambda \not\in \sigma_{ap}(S)$ if and only if $S-\lambda I$ is bounded below.
\end{proposition}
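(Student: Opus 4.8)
The final statement is the classical equivalence between an operator being bounded below and a scalar lying outside the approximate point spectrum, so the plan is simply to unwind the two definitions against each other. First I would reduce to the case $\lambda = 0$: writing $R = S - \lambda I$, the condition $\lambda \notin \sigma_{ap}(S)$ says exactly that no sequence of unit vectors $\{x_n\}$ satisfies $\|R x_n\| \to 0$, while $S - \lambda I$ being bounded below is the same as $R$ being bounded below. So it suffices to prove: $R$ is bounded below if and only if there is no sequence of unit vectors $\{x_n\}$ with $\|R x_n\| \to 0$.

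For the forward direction I would assume $\|Rx\| \geq c\|x\|$ for all $x$ and some fixed $c > 0$; then along any sequence of unit vectors $\{x_n\}$ one has $\|R x_n\| \geq c$ for every $n$, so $\|Rx_n\|$ cannot tend to $0$, which gives the ``no such sequence'' conclusion. For the converse I would argue contrapositively: if $R$ is not bounded below, I claim the infimum of $\|Rx\|$ over the unit sphere of $X$ equals $0$. Indeed, if that infimum were some $c > 0$, then for every nonzero $x$, applying the estimate to $x/\|x\|$ and using positive homogeneity of the norm would yield $\|Rx\| \geq c\|x\|$ (the inequality being trivial at $x = 0$), contradicting the assumption that $R$ is not bounded below. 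Hence one can choose unit vectors $x_n$ with $\|R x_n\| \to 0$, which is precisely what is required.

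I do not anticipate any genuine obstacle here: the argument is a direct transcription of the definitions, and the only step deserving a line of justification is the reduction of ``not bounded below'' to ``the infimum of $\|Rx\|$ over the unit sphere vanishes,'' which is immediate from the homogeneity of the norm. The purpose of this proposition within the paper is as the last link in a chain: combined with Corollary~\ref{approximate point spectrum}, which identifies $\sigma_{ap}(M_\psi)$ with $\overline{\psi(T)}$, it will allow one to conclude that $M_\psi$ is bounded below exactly when $0 \notin \overline{\psi(T)}$, that is, when $\inf_{v \in T}|\psi(v)| > 0$.
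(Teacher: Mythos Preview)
Your argument is correct and is exactly the standard unwinding of the definitions. Note, however, that the paper does not supply its own proof of this proposition: it is quoted verbatim as Proposition~6.4 of Conway's \emph{A Course in Functional Analysis} and used as a black box to derive Theorem~\ref{bounded below}. So there is nothing to compare against; your proposal simply fills in the (elementary) details that the paper chose to outsource to the reference.
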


	We next characterize the bounded multiplication operators on $\LipW$ or $\LipWo$ which are bounded below.

\begin{theorem}\label{bounded below} If $M_\psi$ is a bounded multiplication operator on $\LipW$ or $\LipWo$, then $M_\psi$ is bounded below if and only if $\inf\limits_{v\in T}|\psi(v)|>0$.
\end{theorem}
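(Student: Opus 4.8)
First I would dispose of the easy direction. Suppose $\inf_{v\in T}|\psi(v)|=0$. Then $0\in\overline{\psi(T)}=\s_{ap}(M_\psi)$ by Corollary~\ref{approximate point spectrum}, so by Proposition~\ref{conway approximate point spectrum} the operator $M_\psi - 0\cdot I = M_\psi$ is not bounded below. Conversely, suppose $\d:=\inf_{v\in T}|\psi(v)|>0$. The natural strategy is to show that $M_\psi$ is \emph{invertible} with a bounded inverse, which in particular forces it to be bounded below (if $M_\psi^{-1}$ is bounded, then $\|f\| = \|M_\psi^{-1}M_\psi f\|\le \|M_\psi^{-1}\|\,\|M_\psi f\|$, i.e. $\|M_\psi f\|\ge \|M_\psi^{-1}\|^{-1}\|f\|$). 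The candidate inverse is $M_{1/\psi}$. Since $|\psi(v)|\ge\d$ for all $v$, the function $1/\psi$ is bounded by $1/\d$, and
\begin{equation*}
\sup_{v\in T^*}|v|\log|v|\,D(1/\psi)(v) = \sup_{v\in T^*}|v|\log|v|\left|\frac{1}{\psi(v)}-\frac{1}{\psi(v^-)}\right| \le \frac{1}{\d^2}\sup_{v\in T^*}|v|\log|v|\,D\psi(v) < \infty,
\end{equation*}
where the last bound uses that $M_\psi$ is bounded, hence $\psi$ satisfies condition (c) of Theorem~\ref{boundedness}. Applying Theorem~\ref{boundedness} again, $M_{1/\psi}$ is bounded on $\LipW$ (resp.\ $\LipWo$), and $M_\psi M_{1/\psi}=M_{1/\psi}M_\psi=I$, so $M_\psi$ is invertible with bounded inverse, completing the argument.

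Alternatively—and perhaps more in the spirit of the section—I could argue purely through the approximate point spectrum: $M_\psi$ is bounded below if and only if $0\notin\s_{ap}(M_\psi)$ (Proposition~\ref{conway approximate point spectrum} with $\l=0$), and $\s_{ap}(M_\psi)=\overline{\psi(T)}$ (Corollary~\ref{approximate point spectrum}), so $M_\psi$ is bounded below iff $0\notin\overline{\psi(T)}$, which is equivalent to $\inf_{v\in T}|\psi(v)|>0$. This is essentially a one-line deduction from the results already in hand.

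I do not anticipate a genuine obstacle here: the only mild subtlety is making sure the symbol $1/\psi$ genuinely satisfies the boundedness criterion, i.e.\ that the weighted difference estimate survives the inversion, which it does thanks to the factor $1/\d^2$ coming from $|\psi(v)\psi(v^-)|\ge\d^2$. If I take the second route via $\s_{ap}$, even that computation is unnecessary, and the entire proof reduces to invoking Corollary~\ref{approximate point spectrum} and Proposition~\ref{conway approximate point spectrum}; the main ``work'' was already done in establishing $\s(M_\psi)=\overline{\psi(T)}$. I would likely present the direct invertibility argument for the nontrivial direction since it is self-contained and also yields that $M_\psi$ is invertible (not merely injective with closed range), and use the $\s_{ap}$ observation for the converse.
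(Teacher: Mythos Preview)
Your proposal is correct, and the second route you sketch---invoking Proposition~\ref{conway approximate point spectrum} and Corollary~\ref{approximate point spectrum} to conclude that $M_\psi$ is bounded below iff $0\notin\sigma_{ap}(M_\psi)=\overline{\psi(T)}$ iff $\inf_{v\in T}|\psi(v)|>0$---is precisely the proof the paper gives, in full.

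Your first route (showing $M_{1/\psi}$ is bounded via Theorem~\ref{boundedness} and hence $M_\psi$ is invertible) is also correct and has the advantage of yielding invertibility rather than merely boundedness below; note, however, that this computation already appears verbatim in the paper's proof of Theorem~\ref{spectrum}(b), so invoking it again would be redundant. The paper opts for the pure spectral deduction because the work is already done: once $\sigma_{ap}(M_\psi)=\overline{\psi(T)}$ is established, the theorem is immediate.
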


\begin{proof} By Proposition~\ref{conway approximate point spectrum}, if $M_\psi$ is a bounded operator on $\LipW$ or $\LipWo$, then $M_\psi$ is bounded below if and only if $0\notin \s_{ap}(M_\psi)$. By Corollary~\ref{approximate point spectrum}, this condition is equivalent to $0\notin \overline{\psi(T)}$, i.e. $\inf\limits_{v\in T}|\psi(v)|>0$.\end{proof}

\section{Compactness of $M_\psi$}\label{Section:Compact}
In this section, we characterize the compact multiplication operators on $\LipW$ and $\LipWo$.

\begin{lemma}\label{compact:chara} A bounded multiplication operator $M_\psi$ on \rm{$\LipW$} (respectively, \rm{$\LipWo$}) is compact if and only if \rm{$\lipwnorm{\psi f_n}\to 0$} as $n\to\infty$ for every bounded sequence $\{f_n\}$ in \rm{$\LipW$} (respectively, \rm{$\LipWo$}) converging to 0 pointwise.\end{lemma}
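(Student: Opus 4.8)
The plan is to prove the standard compactness criterion for multiplication operators on a functional Banach space, adapted to the pair $\LipW$ (resp. $\LipWo$). The forward direction is the easy one: suppose $M_\psi$ is compact and let $\{f_n\}$ be a bounded sequence converging to $0$ pointwise on $T$. By Proposition~\ref{weakconv}, $f_n\to 0$ weakly (in the $\LipWo$ case directly; in the $\LipW$ case one notes that the relevant weak-convergence argument still applies to the test functionals, or one passes through the fact that point evaluations are bounded and uses that a compact operator maps weakly null sequences to norm null sequences — I would phrase it via: a bounded sequence converging pointwise on $T$ is bounded, and since $M_\psi$ is compact, every subsequence has a norm-convergent sub-subsequence whose limit must be $0$ because $M_\psi f_n=\psi f_n\to 0$ pointwise, the pointwise limit being forced by boundedness of point evaluations from Corollary~\ref{funct_Banach}). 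Hence $\lipwnorm{\psi f_n}=\lipwnorm{M_\psi f_n}\to 0$.

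For the converse, assume $\lipwnorm{\psi f_n}\to 0$ for every bounded pointwise-null sequence $\{f_n\}$, and let $\{g_n\}$ be an arbitrary sequence in the unit ball of $\LipW$ (resp. $\LipWo$); I must produce a norm-convergent subsequence of $\{M_\psi g_n\}$. Since $\LipW$ is a functional Banach space, for each fixed $v\in T$ the sequence $\{g_n(v)\}$ is bounded in $\C$ by the estimate $|g_n(v)|\le(1+\log|v|)\lipwnorm{g_n}$ of Proposition~\ref{modulus_est} (and $|g_n(o)|\le\lipwnorm{g_n}$). A diagonal argument over the countable vertex set $T$ then yields a subsequence $\{g_{n_k}\}$ converging pointwise on $T$ to some function $g$. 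One checks $g\in\LipW$ (resp. $\LipWo$) with $\lipwnorm{g}\le 1$, exactly as in the completeness proof of Theorem~\ref{Banach}: $|v|Dg(v)\le\liminf_k|v|Dg_{n_k}(v)\le 1$, and for the little space the uniform smallness of $|v|Dg_{n_k}(v)$ for $|v|$ large is not automatic, so instead I would avoid claiming $g\in\LipWo$ and argue differently — see below. Set $h_k=g_{n_k}-g$; then $\{h_k\}$ is bounded in $\LipW$ and converges to $0$ pointwise, so by hypothesis $\lipwnorm{M_\psi h_k}=\lipwnorm{\psi g_{n_k}-\psi g}\to 0$, i.e. $M_\psi g_{n_k}\to\psi g$ in norm. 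Since $M_\psi$ is bounded and $\psi g=M_\psi g$ lies in the space, this exhibits a convergent subsequence, proving compactness.

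The one genuine subtlety is the $\LipWo$ case: there I need the limit function $g$ (hence $\psi g$) to lie in $\LipWo$, and also need the hypothesis — stated for bounded pointwise-null sequences in $\LipWo$ — to apply to $h_k=g_{n_k}-g$, which requires $h_k\in\LipWo$, hence $g\in\LipWo$. The cleanest fix is to observe that $\LipWo$ is a \emph{closed} subspace of $\LipW$ (by the Remark following Proposition~\ref{dense}), so that $g$, being the $\LipW$-norm limit of nothing a priori, still need not obviously be in $\LipWo$; I would instead note that the convergence $M_\psi g_{n_k}\to\psi g$ takes place in $\LipW$, the subsequence $\{M_\psi g_{n_k}\}$ lies in the closed subspace $\LipWo$, hence its limit $\psi g$ lies in $\LipWo$, and therefore $\psi g=M_\psi g$ forces nothing about $g$ itself — what matters is only that the image sequence converges inside $\LipWo$. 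To make the hypothesis applicable I would, for the $\LipWo$ case, run the diagonal extraction so that $\{g_{n_k}\}$ converges pointwise and then use that $g\in\LipW$ with $M_\psi g\in\LipWo$; the sequence $h_k=g_{n_k}-g$ is bounded in $\LipW$ and pointwise null, and since compactness on $\LipWo$ is equivalent to compactness of $M_\psi$ restricted to $\LipWo$ viewed inside $\LipW$, I can equally invoke the $\LipW$-version of the hypothesis. Thus the only real obstacle is bookkeeping about which space each auxiliary sequence lives in, and it is resolved by the closedness of $\LipWo$ in $\LipW$ together with Proposition~\ref{weakconv}; the analytic content is entirely the diagonal extraction plus the $\liminf$ estimate borrowed verbatim from the proof of Theorem~\ref{Banach}.
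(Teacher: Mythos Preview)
Your approach coincides with the paper's: in the forward direction you use the subsequence/sub-subsequence argument via boundedness of point evaluations (exactly what the paper does), and in the converse you extract a pointwise-convergent subsequence by diagonalization, verify the limit $g$ lies in $\LipW$ via the $\liminf$ estimate, and apply the hypothesis to $h_k=g_{n_k}-g$. For the $\LipW$ case this is complete and matches the paper verbatim.

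You are right to flag the $\LipWo$ converse as delicate---the paper dismisses it with ``analogous'', but the pointwise limit $g$ of a bounded sequence in $\LipWo$ need \emph{not} lie in $\LipWo$ (take $g_n$ supported on $|v|\le n$ with $|v|Dg_n(v)=1$ there), so $h_k=g_{n_k}-g$ need not lie in $\LipWo$ and the hypothesis, as stated for $\LipWo$-sequences, does not directly apply to it. However, your proposed fix does not work: you write that you ``can equally invoke the $\LipW$-version of the hypothesis'', but in the $\LipWo$ statement you are only \emph{given} the hypothesis for bounded pointwise-null sequences in $\LipWo$; the $\LipW$-version is a strictly stronger assumption you do not have. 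The closedness of $\LipWo$ in $\LipW$ lets you conclude that a norm limit of $\{M_\psi g_{n_k}\}$, once you have one, lies in $\LipWo$, but it does not manufacture the missing hypothesis.

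A clean repair that stays within the $\LipWo$ hypothesis: show directly that $\{M_\psi g_{n_k}\}$ is Cauchy. If not, pick $\varepsilon>0$ and indices $a_j,b_j\to\infty$ with $\lipwnorm{\psi(g_{a_j}-g_{b_j})}\ge\varepsilon$; but $g_{a_j}-g_{b_j}\in\LipWo$, is bounded by $2$, and converges pointwise to $g-g=0$, so the hypothesis forces $\lipwnorm{\psi(g_{a_j}-g_{b_j})}\to 0$, a contradiction. Then $\{M_\psi g_{n_k}\}$ converges in the closed subspace $\LipWo$, proving compactness. With this modification your argument is complete and essentially the paper's.
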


\begin{proof} We shall prove the result for the bounded operator $M_\psi$ acting on $\LipW$. The proof for the case of $\LipWo$ is analogous.

	Assume $M_\psi$ is compact on $\LipW$ and let $\{f_n\}$ be a bounded sequence in $\LipW$ converging to 0 pointwise. By rescaling the sequence, if necessary, we may assume $\lipwnorm{f_n} \le 1$ for all $n\in \N$. By the compactness of $M_\psi$, $\{f_n\}$ has a subsequence $\{f_{n_k}\}$ such that $\{\psi f_{n_k}\}$
 converges in norm to some function $f\in\LipW$. Observe that $\psi(o) f_{n_k}(o)\to f(o)$ and for $v\in T^*$, by Proposition~\ref{modulus_est} applied to the function $\psi f_{n_k}-f$, we have
\ben |\psi(v)f_{n_k}(v)-f(v)|\le (1+\log|v|)\lipwnorm{\psi f_{n_k}-f}.\nonumber\eeqn
Therefore, $\psi f_{n_k}\to f$ pointwise. Since by assumption, $f_n\to 0$ pointwise, it follows that $f$ must be identically 0, whence $\lipwnorm{\psi f_n}\to 0$. Since $0$ is the only limit point in $\LipW$ of the sequence $\{\psi f_n\}$, it follows that $\lipwnorm{\psi f_n}\to 0$ as $n\to\infty$.

	Conversely, suppose that for every bounded sequence $\{f_n\}$ in $\LipW$ converging to 0 pointwise, $\lipwnorm{\psi f_n}\to 0$ as $n\to\infty$. Let $\{g_n\}$ be a sequence in $\LipW$ with $\lipwnorm{g_n}\le 1$. Then $|g_n(o)|\le 1$ and by Proposition~\ref{modulus_est}, for each $v\in T^*$, we have $|g_n(v)|\le 1+\log|v|$. Therefore, $g_n$ is uniformly bounded on finite subsets of $T$ and so some subsequence, which for notational convenience we reindex as the original sequence, converges to some function $g$. Then, for $v\in T^*$, we have
\begin{equation*}Dg(v)\le |g(v)-g(v^-)-(g_n(v)-g_n(v^-))|+D g_n(v).\end{equation*} Fix $\e>0$ and $v\in T$, $|v|\ge 2$. Since $g_n\to g$ pointwise,
\begin{equation*}|g_n(v)-g(v)|<\e/(2|v|)\end{equation*} and \begin{equation*}|g_n(v^-)-g(v^-)|<\e/(2|v^-|)\end{equation*} for all $n$ sufficiently large. Therefore $|v|D g(v)<\e +|v|Dg_n(v)$ for $n$ sufficiently large, so $g\in\LipW$.
Therefore, the sequence $\{f_n\}$ defined by $f_n=g_n-g$ is bounded in $\LipW$ and converges to 0 pointwise; hence, by the hypothesis, $\lipwnorm{\psi f_n}\to 0$ as $n\to\infty$. We conclude that $\psi g_n\to\psi g$ in norm, proving the compactness of $M_\psi$.\end{proof}

\begin{theorem}~\label{chara_compactness} For $M_\psi$ a bounded multiplication operator on \rm{$\LipW$}, the following are equivalent statements.
\begin{enumerate}
\item[\normalfont{(a)}] $M_\psi$ is compact on \rm{$\LipW$}.

\item[\normalfont{(b)}] $M_\psi$ is compact on \rm{$\LipWo$}.

\item[\normalfont{(c)}] $\displaystyle\lim\limits_{|v|\to\infty}\psi(v)=0$ and $\displaystyle\lim\limits_{|v|\to \infty}|v|\log|v|D\psi(v)=0.$
\end{enumerate}
\end{theorem}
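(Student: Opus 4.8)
The plan is to prove the cycle of implications (c)$\Rightarrow$(a)$\Rightarrow$(b)$\Rightarrow$(c), which yields all the equivalences, using throughout the sequential compactness criterion of Lemma~\ref{compact:chara} and the product-rule estimates (\ref{prodrule}) and (\ref{prodrule2}). The implication (a)$\Rightarrow$(b) is immediate: a compact $M_\psi$ on $\LipW$ is in particular bounded there, so by Theorem~\ref{boundedness} it maps the closed subspace $\LipWo$ into itself, and the restriction of a compact operator to a closed invariant subspace is compact.

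For (c)$\Rightarrow$(a) I would verify the criterion of Lemma~\ref{compact:chara} directly. Suppose $\{f_n\}\subseteq\LipW$ with $\lipwnorm{f_n}\le K$ and $f_n\to0$ pointwise; then $|\psi(o)||f_n(o)|\to0$, and by (\ref{prodrule}) together with Proposition~\ref{modulus_est},
$$|v|D(\psi f_n)(v)\le K\big(|v|(1+\log|v|)D\psi(v)+|\psi(v^-)|\big),\qquad v\in T^*.$$
Condition (c) gives directly $\psi(v)\to0$ and $|v|\log|v|D\psi(v)\to0$, hence also $|v|D\psi(v)\to0$ (divide by $\log|v|$) and so $|v|(1+\log|v|)D\psi(v)\to0$. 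Thus, given $\e>0$, there is $N$ with $\sup_{|v|>N}|v|D(\psi f_n)(v)\le 2K\e$ for every $n$; on the finitely many vertices of length at most $N$, pointwise convergence of $f_n$ (with $\psi\in L^\infty$) forces $|v|D(\psi f_n)(v)\to0$ as $n\to\infty$. Therefore $\limsup_n\lipwnorm{\psi f_n}\le 2K\e$, and letting $\e\to0$ gives compactness of $M_\psi$ on $\LipW$.

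The main work is (b)$\Rightarrow$(c), and the difficulty is selecting the test functions. Assume $M_\psi$ is compact on $\LipWo$. To obtain $\psi(v)\to0$, suppose not: there are $\e>0$ and $v_n$ with $|v_n|\to\infty$, $|\psi(v_n)|\ge\e$; then $g_n=\chi_{v_n}/\lipwnorm{\chi_{v_n}}$ lies in $\LipWo$ (as $D\chi_{v_n}$ has finite support), has unit norm, and tends to $0$ pointwise, while $\psi\chi_{v_n}=\psi(v_n)\chi_{v_n}$ yields $\lipwnorm{\psi g_n}=|\psi(v_n)|\ge\e$, contradicting Lemma~\ref{compact:chara}. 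Knowing $\psi(v)\to0$, suppose $|v|\log|v|D\psi(v)\not\to0$, so there are $\e>0$ and $v_n$ with $m_n:=|v_n|\to\infty$ and $m_n\log m_n\,D\psi(v_n)\ge\e$. I would test against the truncated logarithm depending only on $|v|$: with $a_n=\lceil\sqrt{m_n}\,\rceil$, set $f_n(v)=0$ for $|v|<a_n$, $f_n(v)=\log(|v|/a_n)$ for $a_n\le|v|\le m_n$, and $f_n(v)=\log(m_n/a_n)$ for $|v|>m_n$. By Lemma~\ref{easy}, $|v|Df_n(v)\le|v|/(|v|-1)\le 2$ throughout the growth range and $Df_n(v)=0$ otherwise, so $\lipwnorm{f_n}\le2$; since $f_n$ is eventually constant, $f_n\in\LipWo$; and $f_n\to0$ pointwise because $a_n\to\infty$. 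By Lemma~\ref{compact:chara}, $\lipwnorm{\psi f_n}\to0$, so (\ref{prodrule2}) evaluated at $v_n$ gives
$$m_n D\psi(v_n)|f_n(v_n)|\le\lipwnorm{\psi f_n}+|\psi(v_n^-)|\,m_nDf_n(v_n)\le\lipwnorm{\psi f_n}+2|\psi(v_n^-)|\to0,$$
since $|v_n^-|=m_n-1\to\infty$. But $|f_n(v_n)|=\log(m_n/a_n)\ge\frac14\log m_n$ for $m_n$ large, whence $m_nD\psi(v_n)|f_n(v_n)|\ge\frac14 m_n\log m_n\,D\psi(v_n)\ge\frac{\e}{4}$, a contradiction. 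The delicate point is exactly this construction: $f_n$ must be norm-bounded in $\LipWo$, vanish pointwise, yet have size $\asymp\log m_n$ at $v_n$, and the choice $a_n=\lceil\sqrt{m_n}\,\rceil$ is what balances these competing requirements.
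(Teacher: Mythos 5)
Your proof is correct and follows essentially the same route as the paper: the same sequential compactness criterion (Lemma~\ref{compact:chara}), scaled characteristic functions at $v_n$ to force $\lim_{|v|\to\infty}\psi(v)=0$, a truncated logarithm vanishing below roughly $\sqrt{|v_n|}$ and of size comparable to $\log|v_n|$ at $v_n$ for the second limit, and the identical estimate via (\ref{prodrule}) and Proposition~\ref{modulus_est} for (c)$\Rightarrow$(a). The only organizational difference is that you close the equivalence through the cycle (c)$\Rightarrow$(a)$\Rightarrow$(b)$\Rightarrow$(c), obtaining (a)$\Rightarrow$(b) by restricting the compact operator to the closed invariant subspace $\LipWo$ (and handling the residual term $|\psi(v_n^-)|Df_n(v_n)$ via the already-proved first limit), whereas the paper proves (a)$\Leftrightarrow$(c) directly and observes that (b)$\Leftrightarrow$(c) is analogous because its test functions lie in $\LipWo$ and are flat near $v_n$.
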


\begin{proof} First, we prove (a)$\Longrightarrow$(c).  Assume $M_\psi$ is compact on $\LipW$. Let $\{v_n\}$ be a sequence in $T$ such that $2< |v_n|\to\infty$. We are going to show that
\ben &\,&\lim\limits_{n\to\infty}\psi(v_n)=0 \label{f1}\\&\,&\lim\limits_{n\to \infty}|v_n|\log|v_n|D\psi(v_n)=0.\label{f2}\eeqn
Let $f_n=\displaystyle\frac1{|v_n|}\c_{v_n}$. Then $f_n\to 0$ pointwise and
\begin{equation*}\lipwnorm{f_n}=\sup_{v\in T^*}|v|Df_n(v)=\frac{|v_n|+1}{|v_n|}<\frac32,\end{equation*}
so that by Lemma~\ref{compact:chara}, we obtain $|\psi(v_n)|\le \lipwnorm{\psi f_n}\to 0$ as $n\to\infty$, proving (\ref{f1}).

To prove (\ref{f2}), for $n\in\N$ let \begin{equation*}g_n(v)=\begin{cases} 0& \quad\hbox{ if }|v|< \sqrt{|v_n|},\\
2\log|v|-\log|v_n| & \quad\hbox{ if }\sqrt{|v_n|}\le |v|<|v_n|-1,\\
\log|v_n| & \quad\hbox{ if }|v|\ge |v_n|-1.\end{cases}\end{equation*}
Then $Dg_n(v)=0$ if $|v|\le \sqrt{|v_n|}$ or $|v|>|v_n|$, and if $\sqrt{|v_n|}< |v| <|v_n|-1$ then $|v|Dg_n(v)\le 4$. Thus, $\{\lipwnorm{g_n}\}$ is bounded and $g_n\to 0$ pointwise as $n\to\infty$. By Lemma~\ref{compact:chara}, we get $|v_n|\log|v_n||\psi(v_n)|\le \lipwnorm{\psi g_n}\to 0$ as $n\to\infty$.

Next, we prove (c)$\Longrightarrow$(a).  Assume the conditions in (c) hold and set aside the case when $\psi$ is the constant 0. By Lemma~\ref{compact:chara}, to prove that $M_\psi$ is compact on $\LipW$, it suffices to show that if $\{f_n\}$ is a sequence in $\LipW$ converging to 0 pointwise and such that $s=\displaystyle\sup_{n\in\N}\lipwnorm{f_n}<\infty$, then $\lipwnorm{\psi f_n}\to 0$ as $n\to\infty$. Let $\{f_n\}$ be such a sequence and fix a positive number $\e$. Then $|f_n(o)|<\displaystyle\frac{\e}{3\lipwnorm{\psi}}$ for all $n$ sufficiently large, and there exists $M\in \N$ such that \begin{equation*}|\psi(v)|<\frac{\e}{3s}\ \hbox{ and }\ |v|\log|v|D\psi(v)<\frac{\e}{3s}\ \hbox{ for }\ |v|\ge M.\end{equation*}
If $|v|> M$, then $|v^-|\ge M$, and $|\psi(v^-)|<\displaystyle\frac{\e}{3s}$. Thus, by (\ref{prodrule}) and Proposition~\ref{modulus_est}, we obtain
\begin{equation*} |v|D(\psi f_n)(v) \le |v|D\psi(v)(1+\log|v|)\lipwnorm{f_n}+\frac{\e}3.\end{equation*}
In particular, for $|v|>M$, we get
\begin{equation*}|v|D(\psi f_n)(v) \le 2|v|\log|v|D\psi(v)s+\frac{\e}3<\e.\end{equation*}
Since $f_n\to 0$ uniformly on $\{v\in T: |v|\le M\}$ as $n\to\infty$, so does the sequence $\{|w|D(\psi f_n)(w)\}_{w\in T^*}$ (under an identification of $T$ with $\N$). Therefore, for $n$ sufficiently large and for each $v\in T^*$, $|v|D(\psi f_n)(v)<\e$. On the other hand, $f_n(o)\to 0$, and so $\lipwnorm{\psi f_n}\to 0$, as $n\to\infty$.

Note that for $n\in\N$ the functions $f_n$ and $g_n$ defined to prove (a)$\Longrightarrow$(c) are in $\LipWo$. Therefore the proof of (b)$\Longrightarrow$(c) is analogous. The converse can also be proved similarly. \end{proof}

\section{Essential Norm of $M_\psi$}\label{ess_norm}
In this section, we provide estimates on the essential norm of the bounded multiplication operators on $\LipW$.  We recall that the \emph{essential norm} $\|S\|_e$ of an operator $S$ on a Banach space $X$ is defined as \begin{equation*}\|S\|_e=\inf\{\|S-K\|: K \hbox{ compact operator on }X\}.\end{equation*}

\begin{definition} Given a bounded multiplication operator $M_\psi$ on \rm{$\LipW$} or \rm{$\LipWo$}, define
\ben A(\psi)&=&\lim_{n\to\infty}\sup_{|v|\ge\, n}|\psi(v)|\nonumber\\
B(\psi)&=&\lim_{n\to\infty}\sup_{|v|\ge\, n}|v|\log|v|D\psi(v).\nonumber\eeqn
\end{definition}

\begin{theorem}\label{lowerestimate} Let $M_\psi$ be bounded on \rm{$\LipW$} or \rm{$\LipWo$}. Then
\begin{equation*}\|M_\psi\|_e\ge \max\left\{A(\psi),B(\psi)\right\}.\end{equation*}
\end{theorem}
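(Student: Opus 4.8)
The plan is to obtain the two lower bounds $\|M_\psi\|_e\ge A(\psi)$ and $\|M_\psi\|_e\ge B(\psi)$ separately by testing $M_\psi - K$ (for an arbitrary compact operator $K$) against carefully chosen normalized sequences that converge to $0$ pointwise, and then invoking Proposition~\ref{weakconv} so that $Kf_n\to 0$ in norm. The general mechanism is this: if $\{f_n\}$ is a bounded sequence in $\LipWo$ with $f_n\to 0$ pointwise, then by Proposition~\ref{weakconv} we have $f_n\to 0$ weakly, hence $Kf_n\to 0$ in norm for every compact $K$; therefore
\begin{equation*}\|M_\psi-K\|\ge \limsup_{n\to\infty}\frac{\lipwnorm{(M_\psi-K)f_n}}{\lipwnorm{f_n}}\ge \limsup_{n\to\infty}\frac{\lipwnorm{\psi f_n}}{\lipwnorm{f_n}}.\end{equation*}
Taking the infimum over compact $K$ gives $\|M_\psi\|_e\ge \limsup_{n\to\infty}\lipwnorm{\psi f_n}/\lipwnorm{f_n}$, and it remains to choose good sequences.

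First I would establish $\|M_\psi\|_e\ge A(\psi)$. Pick a sequence $\{v_n\}$ in $T$ with $|v_n|\to\infty$ and $|\psi(v_n)|\to A(\psi)$ (possible by definition of $A(\psi)$, passing to a subsequence). Following the compactness proof, set $f_n=\frac{1}{|v_n|}\c_{v_n}$; these lie in $\LipWo$, converge to $0$ pointwise, and satisfy $\lipwnorm{f_n}=\frac{|v_n|+1}{|v_n|}\to 1$. Since $\psi f_n=\frac{\psi(v_n)}{|v_n|}\c_{v_n}$ and (using $D\c_{v_n}$ supported on $v_n$ and its children) $\lipwnorm{\psi f_n}\ge |v_n|\cdot D(\psi f_n)(v_n)=|\psi(v_n)|$, we get $\limsup_n \lipwnorm{\psi f_n}/\lipwnorm{f_n}\ge \lim_n |\psi(v_n)|=A(\psi)$, as desired.

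Next I would establish $\|M_\psi\|_e\ge B(\psi)$. Pick $\{v_n\}$ with $|v_n|\to\infty$ and $|v_n|\log|v_n|D\psi(v_n)\to B(\psi)$. Here the natural test function is a suitably normalized truncated-logarithm bump concentrated near $v_n$: following the construction of $g_n$ in the proof of Theorem~\ref{chara_compactness}, but shifted so that the jump of size $\sim \log|v_n|$ occurs exactly across the edge $[v_n^-,v_n]$ — for instance a function $h_n$ that is $0$ on vertices of length $<\sqrt{|v_n|}$, equals $2\log|v|-\log|v_n|$ for $\sqrt{|v_n|}\le |v|< |v_n|$, and equals $\log|v_n|$ on $S_{v_n}$ — rescaled by a bounded factor so that $\lipwnorm{h_n}$ stays bounded and, crucially, $|h_n(v_n)|/\lipwnorm{h_n}$ is bounded below by a positive constant times $\log|v_n|/\lipwnorm{h_n}$; one then checks $h_n\to 0$ pointwise. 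From the product estimate $D(\psi h_n)(v_n)\ge D\psi(v_n)|h_n(v_n)| - |\psi(v_n^-)|Dh_n(v_n)$ together with $|\psi(v_n^-)|\to 0$ (which holds because $M_\psi$ bounded forces, via $\sup|v|\log|v|D\psi(v)<\infty$, that $D\psi$ is small, while the relevant control actually comes from the already-known decay — more precisely one argues that the subtracted term is negligible compared to $|v_n|\log|v_n|D\psi(v_n)$ when the latter does not tend to $0$), one extracts $\limsup_n \lipwnorm{\psi h_n}/\lipwnorm{h_n}\ge c\, B(\psi)$ for a constant $c$; refining the normalization so $c=1$ yields the claim.

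The main obstacle will be the second bound: unlike the first test sequence, here the "error term" $|\psi(v_n^-)|\,|v_n|Dh_n(v_n)$ in the product rule is not automatically negligible, and one must choose the bump $h_n$ so that the jump $|h_n(v_n)-h_n(v_n^-)|$ is small (like $\frac{1}{|v_n|}$ order in the weighted norm, so that $|v_n|Dh_n(v_n)$ is bounded) while simultaneously $|h_n(v_n)|$ is of order $\log|v_n|$ and $\lipwnorm{h_n}$ is controlled; reconciling these — essentially building a function whose weighted increments are uniformly $O(1)$ but which accumulates to height $\sim\log|v_n|$ at $v_n$, with a final small step — and getting the constant in front of $B(\psi)$ to be exactly $1$ (perhaps by letting an auxiliary exponent tend to a limit as in the proof of Theorem~\ref{boundedness}(b)$\Rightarrow$(c)) is the delicate part. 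I would handle the $\LipW$ case by noting each $f_n$, $h_n$ constructed lies in $\LipWo$, so the same sequences witness the lower bound for the essential norm on $\LipW$ as well, since a compact operator on $\LipW$ restricts to one on the closed subspace generated by these functions.
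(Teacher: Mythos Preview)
Your approach to $A(\psi)$ is correct and essentially the same as the paper's (the paper uses $f_n=\frac1n\chi_{\{|v|=n\}}$ rather than $\frac{1}{|v_n|}\chi_{v_n}$, but the mechanism is identical).

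For $B(\psi)$ there is a genuine gap. You correctly identify the error term $|\psi(v_n^-)|\,|v_n|Dh_n(v_n)$ as the obstacle, but your proposed resolutions are wrong. The claim that $|\psi(v_n^-)|\to 0$ is false: boundedness of $M_\psi$ only gives $\psi\in L^\infty$ and $\sup|v|\log|v|D\psi(v)<\infty$, and there are plenty of such $\psi$ with $\inf|\psi|>0$ and $B(\psi)>0$ (e.g.\ $\psi$ oscillating between $1$ and $2$ with increments of size $\frac{1}{|v|\log|v|}$). Your fallback claim that the error term is ``negligible compared to $|v_n|\log|v_n|D\psi(v_n)$'' is equally false: with your $h_n$, the error term is of order $\|\psi\|_\infty$, which can be large even when $B(\psi)$ is small. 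After normalizing by $\lipwnorm{h_n}$ you would obtain a lower bound of the form $c_1 B(\psi)-c_2\|\psi\|_\infty$, which is useless.

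The idea you are missing is to eliminate the error term entirely by choosing the test function so that it is \emph{constant across the edge} $[v_n^-,v_n]$, i.e.\ $h_n(v_n)=h_n(v_n^-)$. Then $|v_n|D(\psi h_n)(v_n)=|v_n|D\psi(v_n)\,|h_n(v_n)|$ exactly, with no subtraction. The paper achieves this by taking $h_{p,n}(v)=\frac{(\log(|v|+1))^{p+1}}{(\log|v_n|)^p}$ for $|v|<|v_n|$ and $h_{p,n}(v)=\log|v_n|$ for $|v|\ge |v_n|$; the shift $|v|\mapsto|v|+1$ makes the plateau begin at $|v|=|v_n|-1$, so $h_{p,n}(v_n^-)=h_{p,n}(v_n)=\log|v_n|$. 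One computes $\lipwnorm{h_{p,n}}\to p+1$, giving $\|M_\psi\|_e\ge \frac{1}{p+1}B(\psi)$, and then lets $p\to 0$. Your instinct about an auxiliary exponent was right, but the exponent serves to push the norm of $h_{p,n}$ down to $1$, not to handle the error term---that has to be killed structurally.
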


\begin{proof} For each $n\in\N$ and $v\in T$, define
$f_n=\displaystyle\frac1{n}\c_{\{v:\,|v|=n\}}$. Then $f_n\in\LipWo$, $\lipwnorm{f_n} =\displaystyle\frac{n+1}{n}\le 2$, and $f_n\to 0$ pointwise. Therefore, by Proposition~\ref{weakconv}, the sequence $\{f_n\}$ converges weakly to 0 in $\LipWo$. Since compact operators are completely continuous \cite{Conway:07}, it follows that $\displaystyle\lim\limits_{n\to\infty}\lipwnorm{Kf_n}=0$ for any compact operator $K$ on $\LipWo$. Therefore, if $K$ is a compact operator on $\LipWo$, then
\begin{equation*}\|M_\psi-K\|\ge \limsup_{n\to\infty}\lipwnorm{(M_\psi-K)f_n}\ge \limsup_{n\to\infty}\lipwnorm{M_\psi f_n}.\end{equation*}
Thus,
\ben \|M_\psi\|_e&\ge & \inf \{\|M_\psi-K\|:\,K \hbox{ compact on }\LipWo\} \nonumber\\
&\ge& \limsup_{n\to\infty} \lipwnorm{M_\psi f_n}\nonumber\\
&=& \limsup_{n\to\infty} \sup_{v\in T^*}|v||\psi(v)f_n(v)-\psi(v^-)f_n(v^-)|\nonumber\\
&= &\lim_{n\to\infty} \frac{n+1}{n}\sup_{|v|\ge n}|\psi(v)|=A(\psi).\nonumber\eeqn

Next we show that $\|M_\psi\|_e\ge B(\psi)$. The result is immediate if $B(\psi)=0$. So assume $\{v_n\}$ is a sequence of vertices of length greater than 1 such that $|v_n|$ is increasing unboundedly and \begin{equation*}\lim_{n\to\infty}|v_n|\log|v_n|D\psi(v_n)=B(\psi).\end{equation*}
Fix $p\in (0,1)$ and for $n\in \N$, let
\begin{equation*} h_{p,n}(v)=\begin{cases}\displaystyle\frac{(\log(|v|+1))^{p+1}}{(\log|v_n|)^p} &\hbox{ if }\quad 0\le |v|< |v_n|,\\
\log|v_n|  &\hbox{ if }\quad |v|\ge |v_n|.\end{cases}\end{equation*}
Then $h_{p,n}(o)=0$, $h_{p,n}(v_n)=h_{p,n}(v_n^-)=\log|v_n|$, and
\begin{equation*}|v|D h_{p,n}(v)=\begin{cases} \displaystyle\frac{|v|}{\log|v_n|^p}\left[(\log(|v|+1))^{p+1}-(\log|v|)^{p+1}\right] &\hbox{ if } 1\le |v|< |v_n|,\\
\ \ 0 &\hbox{ if } |v|\ge |v_n|.\end{cases}\end{equation*}
The supremum of $v\mapsto |v|Dh_n(v)$ is attained at the vertices of length $|v_n|-1$ and by a straightforward calculation it can be written as
\begin{equation*}s_{p,n}=(|v_n|-1)(\log|v_n|-\log(|v_n|-1))\left[\frac{\log(|v_n|-1)}{\log|v_n|}\frac{1-\left(\frac{\log(|v_n|-1)}{\log|v_n|}\right)^p}{1-\frac{\log(|v_n|-1)}{\log|v_n|}}+1\right].\end{equation*} Since the product of the first two factors approaches 1 as $n\to\infty$ and the function $\frac{1-u^{p}}{1-u}$ approaches $p$ as $u\to 1$, we see that $s_{p,n}\to p+1$ as $n\to\infty$.
In particular, $\lipwnorm{h_{p,n}}=s_{p,n}$ yields a bounded sequence. Letting $g_{p,n}=\displaystyle\frac{h_{p,n}}{s_{p,n}}$,  we see that $g_{p,n}\in \LipWo$, $\lipwnorm{g_{p,n}}=1$, and $g_{p,n}\to 0$ pointwise. Consequently, by Proposition~\ref{weakconv}, the sequence $\{g_{p,n}\}$ converges to $0$ weakly. This implies that $\lipwnorm{Kg_{p,n}}\to 0$ as $n\to\infty$ for any compact operator $K$ on $\LipWo$. We deduce that for any such operator $K$
\begin{equation*}\|M_\psi-K\|\ge \limsup_{n\to\infty}\lipwnorm{(M_\psi-K)g_{p,n}}\ge \limsup_{n\to\infty}\lipwnorm{\psi g_{p,n}}.\end{equation*} Therefore
\ben \|M_\psi\|_e&\ge & \inf\{\|M_\psi-K\|:\,K \hbox{ compact on }\LipWo\}\nonumber\\&\ge &\limsup_{n\to\infty}\sup_{v\in T^*}|v|D(\psi g_{p,n})(v).\label{compest1}\eeqn
Next, observe that for each $n\in \N$, $g_{p,n}(v_n)=g_{p,n}(v_n^-)=\displaystyle\frac{\log|v_n|}{s_{p,n}}$, so
\ben |v_n|D(\psi g_{p,n})(v_n)&=&|v_n||\psi(v_n)g_{p,n}(v_n)-\psi(v_n^-)g_{p,n}(v_n^-)|\nonumber\\
&=&|v_n|D\psi(v_n)\frac{\log|v_n|}{s_{p,n}}.\label{compest2}\eeqn
Therefore, from (\ref{compest1}) and (\ref{compest2}), we obtain
\ben \|M_\psi\|_e\ge \frac1{p+1}\lim_{n\to\infty}|v_n|\log|v_n|D\psi(v_n)=\frac{1}{p+1}B(\psi).\nonumber\eeqn
Finally, letting $p$ approach 0, we deduce $\|M_\psi\|_e\ge B(\psi)$, completing the proof.\end{proof}

We now turn to the upper estimate.

\begin{theorem}\label{upperest} If $M_\psi$ is bounded on \rm{$\LipW$} (or equivalently, \rm{$\LipWo$}), then
\begin{equation*}\|M_\psi\|_e\le A(\psi)+B(\psi).\end{equation*}
\end{theorem}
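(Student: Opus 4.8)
The plan is to construct, for each positive integer $n$, a finite-rank (hence compact) operator $K_n$ on $\LipW$ that agrees with $M_\psi$ on the "low levels" of the tree, and then estimate $\|M_\psi - K_n\|$ from above. The natural choice is a truncation operator: given the decomposition of a function used in Proposition~\ref{dense}, let $K_n$ send $f$ to (a function built from) the values of $\psi f$ on the ball $\{|v|\le n\}$, so that $(M_\psi - K_n)f$ depends only on the behavior of $f$ and $\psi$ on the sector levels beyond $n$. Concretely, I would set $K_n f = L_n(\psi f)$, where $L_n g$ is the function agreeing with $g$ on $\{|v|\le n\}$ and equal to $g(v_n)$ (the value at the ancestor of length $n$) on $\{|v|>n\}$, exactly the map $g\mapsto g_n$ from the proof of Proposition~\ref{dense}; this is finite-rank because its image lies in the span of finitely many $p_v$'s, and $M_\psi - K_n$ applied to $f$ vanishes on $\{|v|\le n\}$ while on $\{|v|>n\}$ it equals $\psi f - (\psi f)(v_n)$.

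The second step is the norm estimate. Fix $f$ with $\lipwnorm{f}=1$. Since $(M_\psi - K_n)f$ has zero value at $o$ and its difference operator $D$ is supported on $\{|v|>n\}$ (with a correction at level $n+1$), we have $\lipwnorm{(M_\psi-K_n)f} = \sup_{|v|>n}|v|D(\psi f)(v)$ up to the boundary term at $|v|=n+1$, which I would handle separately and show is also controlled by $A(\psi)$. For $|v|>n$ the product rule (\ref{prodrule}) gives
\begin{equation*}
|v|D(\psi f)(v) \le |v|(1+\log|v|)D\psi(v)\lipwnorm{f} + |\psi(v^-)|\,|v|Df(v).
\end{equation*}
Taking the supremum over $|v|>n$: the first term is bounded by $\sup_{|v|>n}|v|(1+\log|v|)D\psi(v)$, which tends to $B(\psi)$ as $n\to\infty$ (since $|v|(1+\log|v|)D\psi(v) \sim |v|\log|v|D\psi(v)$ for large $|v|$); the second term is bounded by $\big(\sup_{|v^-|\ge n}|\psi(v^-)|\big)\cdot\sup_{v\in T^*}|v|Df(v) \le \big(\sup_{|w|\ge n}|\psi(w)|\big)$, which tends to $A(\psi)$. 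Combining, $\|M_\psi - K_n\| \le \sup_{|v|>n}|v|(1+\log|v|)D\psi(v) + \sup_{|w|\ge n}|\psi(w)| + o(1)$, and letting $n\to\infty$ yields $\|M_\psi\|_e \le B(\psi) + A(\psi)$.

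The main obstacle I anticipate is twofold: first, checking that $K_n$ is genuinely finite-rank and bounded on $\LipW$ (equivalently that $L_n$ is bounded), which requires verifying $L_n g \in \LipW$ with a norm bound independent of nothing problematic — this should follow since $L_n$ only deletes difference-terms beyond level $n$ and inserts one value, so $\lipwnorm{L_n g}\le \lipwnorm{g}$ essentially; and second, the careful treatment of the boundary level $|v|=n+1$, where $(M_\psi - K_n)f$ at such $v$ equals $\psi(v)f(v) - \psi(v^-)f(v^-)$ but with $f(v^-)$ "seen through" $K_n$, so one must confirm that this single extra layer contributes only a term vanishing with $n$ — it does, because $|v^-|=n$ forces $|\psi(v^-)|\le \sup_{|w|\ge n}|\psi(w)| \to A(\psi)$ and the weight-derivative term there is again $\le \sup_{|v|\ge n}|v|(1+\log|v|)D\psi(v)$. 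I would also need to invoke that the operator $M_\psi$ is bounded (Corollary~\ref{funct_Banach}), so $\psi\in L^\infty$, guaranteeing the $o(1)$ terms (e.g.\ from $|\psi(o)f(o)|$-type contributions in the low levels, which $K_n$ cancels exactly) are legitimate.
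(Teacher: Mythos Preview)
Your proposal is correct and follows essentially the same strategy as the paper: approximate $M_\psi$ by compositions with the finite-rank truncation operator $L_n$ (the paper's $K_n$), then use the product rule and Proposition~\ref{modulus_est} to bound $\|M_\psi - K_n\|$ by quantities tending to $A(\psi)+B(\psi)$. The one difference is the order of composition---you take $K_n=L_nM_\psi$ while the paper uses $M_\psi L_n$---and your ordering is in fact slightly cleaner: $(I-L_n)(\psi f)$ gives exactly $\sup_{|v|>n}|v|D(\psi f)(v)$ (your boundary worry at $|v|=n+1$ dissolves, since $v_n=v^-$ there and the formula $D[(I-L_n)(\psi f)](v)=D(\psi f)(v)$ holds for all $|v|>n$), whereas the paper's $M_\psi(I-L_n)f$ requires the extra pointwise estimate $|(I-L_n)f(v)|\le(1+\log|v|)\lipwnorm{f}$.
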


\begin{proof} Fix $n\in\N$, define the operator $K_n$ on $\LipW$ by
\begin{equation*}K_nf(v)=\begin{cases} f(v) &\hbox{ if }\quad |v|\le n,\\
f(v_n) &\hbox{ if }\quad |v|> n,\end{cases}\end{equation*}
where $f\in\LipW$ and $v_n$ is the ancestor of $v$ of length $n$. In particular, $K_nf\in\LipWo$ and
\ben K_nf(o)=f(o).\label{zerocondition}\eeqn
 Furthermore, $K_nf$ attains finitely many values, whose number does not exceed the number 
 of vertices in the closed ball centered at $o$ of radius $n$. Observe that if $\{g_k\}$ is a sequence in $\LipW$ with $\lipwnorm{g_k}\le 1$ for each $k\in\N$, then, $a=\displaystyle\sup_{k\in\N}|g_k(o)|\le 1$ so that $|K_ng_k(o)|\le a$. Furthermore, as a consequence of Proposition~\ref{modulus_est}, for each $v\in T^*$, and for each $k\in\N$, we have
$|K_ng_k(v)|\le 1+\log n$. Therefore, some subsequence $\{K_ng_{k_j}\}_{j\in\N}$ must converge to a function $g$ on $T$ attaining constant values on the sectors determined by the vertices on the sphere centered at $o$ of radius $n$. In particular, $g\in\LipW$ and since $K_ng_{k_j}\to g$ uniformly on the closed ball centered at $o$ of radius $n$, and $DK_ng_{k_j}$ and $Dg$ are 0 outside of the ball, we have
\ben \lipwnorm{K_ng_{k_j}-g}&=&|g_{k_j}(o)-g(o)|+\sup_{|v|\le n}|v|D(g_{k_j}-g)(v)\nonumber\\
&\le &|g_{k_j}(o)-g(o)|\nonumber\\
&\ &+n\sup_{|v|\le n}\left[|g_{k_j}(v)-g(v)|+\ |g_{k_j}(v^-)-g(v^-)|\right],\nonumber\eeqn
which converges to 0 as $j\to\infty$. Thus, $K_n$ is compact.

	Observe that the operator $M_\psi K_n$ is also compact. Furthermore, for $v\in T^*$, we have
\ben |v|D[(I-K_n)f](v)\le |v|Df(v) \le \lipwnorm{f}.\label{goodest}\eeqn
On the other hand, by Proposition~\ref{modulus_est}, we see that
\ben \left|[(I-K_n)f](v)\right|\le (1+\log |v|)\lipwnorm{f}.\label{best}\eeqn
We now use (\ref{best}) and (\ref{goodest}) to estimate $\lipwnorm{\psi(I-K_n)f}$:
\small \ben \lipwnorm{\psi(I-K_n)f}&= &\sup_{|v|>n}|v|\left|\psi(v)[(I-K_n)f](v)-\psi(v^-)[(I-K_n)f](v^-)\right|\nonumber\\
 &\le & \sup_{|v|>n}\left\{|\psi(v^-)||v|D[(I-K_n)f](v) + |v|D\psi(v)|[(I-K_n)f](v)|\right\}\nonumber\\
&\le &\sup_{|v|>n}|\psi(v^-)||v|D[(I-K_n)f](v)\nonumber\\
&\ &+\ \sup_{|v|>n}|v|\log|v|D\psi(v)\frac{|[(I-K_n)f](v)|}{1+\log|v|}\frac{1+\log|v|}{\log|v|}\nonumber\\&\le &\sup_{|v|>n}|\psi(v^-)|\lipwnorm{f}+\sup_{|v|>n}|v|\log|v|D\psi(v)\lipwnorm{f}\left(\frac{1+\log n}{\log n}\right).\nonumber\eeqn
\normalsize Therefore, using this estimate and taking the limit as $n\to\infty$, we obtain
\ben \|M_\psi\|_e&\le &\limsup_{n\to\infty}\|M_\psi-M_\psi K_n\|\nonumber\\
&=&\limsup_{n\to\infty}\sup_{\lipwnorm{f} = 1}\lipwnorm{\psi(I-K_n)f}\nonumber\\
&\le & A(\psi)+B(\psi),\nonumber\eeqn
completing the proof.\end{proof}

\section{Isometries and Isometric Zero Divisors}\label{isometries}
In this section we show that, in analogy to the case of the multiplication operators on the Lipschitz space of the tree \cite{ColonnaEasley:10}, there are no nontrivial isometric multiplication operators.

\begin{theorem}\label{iso_constant} The only isometric multiplication operators on \rm{$\LipW$} or \rm{$\LipWo$} are induced by the constant functions of modulus one.
\end{theorem}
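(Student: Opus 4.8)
The plan is to show that if $M_\psi$ is an isometry on $\LipW$ (or $\LipWo$), then $\psi$ must be a unimodular constant; the converse is immediate since multiplication by a constant of modulus one scales $|f(o)|$ and each $|v|Df(v)$ by the same factor $1$. So assume $M_\psi$ is an isometry. Testing on the constant function $\mathbb{1}$, which has $\lipwnorm{\mathbb{1}}=1$, gives $\lipwnorm{\psi}=\lipwnorm{M_\psi\mathbb{1}}=1$, i.e.\ $|\psi(o)|+\sup_{v\in T^*}|v|D\psi(v)=1$. Combined with Corollary~\ref{funct_Banach}, which gives $\|\psi\|_\infty\le\|M_\psi\|=1$, we already know $\psi$ is bounded by $1$ and that its ``weighted Lipschitz seminorm'' $\sup_{v\in T^*}|v|D\psi(v)$ is at most $1-|\psi(o)|$.

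The key step is to extract, from the isometry condition applied to the functions $p_v=\chi_{S_v}$, the conclusion that $D\psi$ vanishes identically. Fix $v\in T^*$ and compute $\lipwnorm{p_v}$: since $Dp_v=\chi_v$, we have $p_v(o)=0$ (assuming $v\ne o$) and $\lipwnorm{p_v}=|v|$. Now $M_\psi p_v=\psi\,p_v=\psi\chi_{S_v}$, and its norm is $|\psi(o)p_v(o)|+\sup_{w\in T^*}|w|D(\psi p_v)(w)$. The function $\psi p_v$ agrees with $\psi$ on $S_v$ and is $0$ off $S_v$; one checks that $D(\psi p_v)(w)=D\psi(w)$ for $w\in S_v\setminus\{v\}$, that $D(\psi p_v)(v)=|\psi(v)|$, and $D(\psi p_v)(w)=0$ otherwise. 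Thus the isometry condition forces
\[
|v| \;=\; \max\Bigl\{\,|v|\,|\psi(v)|,\ \sup_{w\in S_v\setminus\{v\}}|w|D\psi(w)\,\Bigr\}.
\]
Since $\sup_{w\in S_v\setminus\{v\}}|w|D\psi(w)\le\sup_{w\in T^*}|w|D\psi(w)=1-|\psi(o)|\le 1$, for $|v|\ge 2$ the second term cannot achieve the value $|v|$, so we must have $|v|\,|\psi(v)|=|v|$, i.e.\ $|\psi(v)|=1$ for every $v$ with $|v|\ge 2$. A separate short argument handles $|v|=1$ and $v=o$: apply the same computation with the function $p_v$ for a child $v$ of $o$ and also use $\lipwnorm{\psi}=1$ together with $\|\psi\|_\infty\le 1$; since $D\psi(v)=|\psi(v)-\psi(v^-)|$ and both endpoints are forced toward modulus one, one deduces $|\psi|\equiv 1$ on all of $T$ and hence $\sup_{v\in T^*}|v|D\psi(v)$ must be $0$ lest some $|\psi(v)|<1$. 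Once $D\psi\equiv 0$, $\psi$ is constant, and its modulus is $1$.

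I expect the main obstacle to be the bookkeeping near the root: the formula $\lipwnorm{f}=|f(o)|+\sup_{v\in T^*}|v|Df(v)$ treats $f(o)$ specially, so the clean ``$p_v$ has norm $|v|$'' identity only holds for $v\ne o$, and one must be careful that testing on $p_v$ for $|v|=1$ still pins down $|\psi(o)|$. The cleanest route is probably: first establish $|\psi(v)|=1$ for all $|v|\ge 2$ as above; then note that if $\psi$ were non-constant there would be a first edge $[v^-,v]$ along which $\psi$ changes, contributing $|v|D\psi(v)>0$ to the seminorm, which combined with $|\psi|\equiv 1$ far out would make $\lipwnorm{\psi}>1$ via a telescoping/triangle-inequality estimate against $\|\psi\|_\infty=1$ — contradiction. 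This forces $\psi$ constant on $T^*$, and continuity across the edge from $o$ then forces $\psi(o)$ to equal that same constant, whose modulus is $1$ by $\|\psi\|_\infty\le 1$ and $\lipwnorm{\psi}=1$. The argument for $\LipWo$ is identical since all the test functions $\mathbb{1}$ and $p_v$ lie in $\LipWo$.

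**Isometric zero divisors.** Recall that $g\in\LipW$ is an \emph{isometric zero divisor} if $g$ is not identically zero, $gh=0$ forces $h=0$ fails — rather, the standard meaning is that multiplication by $g$ is an isometry from the subspace $\{h:\ gh\in\LipW\}$ onto an ideal, equivalently $\lipwnorm{gh}=\lipwnorm{h}$ for all relevant $h$ while $g$ vanishes somewhere. The plan here is: if $g$ vanished at some $u\in T$, then taking $h=\chi_u$ (or $h=p_u$) we would get $gh\equiv 0$ on a neighborhood of $u$ yet $h\ne 0$, and the isometry condition $\lipwnorm{gh}=\lipwnorm{h}$ would be violated because $gh$ loses the contribution of the edge(s) at $u$ while $h$ retains $|u|Dh(u)>0$ (for $u\ne o$) or $|h(o)|=1$ (for $u=o$); in either case $\lipwnorm{gh}<\lipwnorm{h}$, a contradiction. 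If instead $g$ never vanishes, then $g$ is bounded away from $0$ only if $g\in L^\infty$ with $\inf|g|>0$, but more to the point the isometry $\lipwnorm{gh}=\lipwnorm{h}$ applied to $h=\mathbb{1}$ gives $\lipwnorm{g}=1$, and applied to $h=p_v$ gives, exactly as in the theorem above, $|g(v)|=1$ for $|v|\ge 2$ and then $g$ constant of modulus one — so $g$ is a unit, not a zero divisor. Hence no isometric zero divisors exist. The one subtlety is making precise the domain on which the isometry is required to hold; using the dense set $\mathcal P$ of Proposition~\ref{dense} one can always choose test functions $p_v$ inside it, so the argument goes through on whatever reasonable definition is adopted.
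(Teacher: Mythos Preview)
Your argument via the test functions $p_v=\chi_{S_v}$ correctly establishes $|\psi(v)|=1$ for $|v|\ge 2$, but the treatment of the root has a genuine gap. The functions $\mathbb{1}$ and $p_v$ (for $v\in T^*$) alone do not determine $|\psi(o)|$. Concretely, take $\psi(o)=0$ and $\psi(v)=1$ for all $v\ne o$. Then $\lipwnorm{\psi}=0+1=1$, so the test on $\mathbb{1}$ is satisfied; and for every $v\in T^*$ we have $\psi p_v=p_v$ (since $o\notin S_v$), so $\lipwnorm{\psi p_v}=|v|=\lipwnorm{p_v}$ and every $p_v$ test is satisfied as well. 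Yet $M_\psi$ is not an isometry: $\lipwnorm{\chi_o}=2$ while $\psi\chi_o\equiv 0$. Your ``telescoping/triangle-inequality'' sketch cannot close this, because with $|\psi(o)|=a$ and seminorm $1-a$ the inequalities $1=|\psi(u_2)|\le a+D\psi(u_1)+D\psi(u_2)\le a+(1-a)+\tfrac{1-a}{2}$ never produce a contradiction.

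The paper avoids this by testing on (normalized) characteristic functions of single vertices rather than sectors: for $v\in T^*$ it takes $f_v=\tfrac{1}{|v|+1}\chi_v$, so that $\psi f_v=\tfrac{\psi(v)}{|v|+1}\chi_v$ is a scalar multiple of $\chi_v$ and $\lipwnorm{\psi f_v}=|\psi(v)|$ directly; for the root it takes $g=\tfrac12\chi_o$, giving $\lipwnorm{\psi g}=|\psi(o)|$. Once $|\psi|\equiv 1$ on all of $T$, the identity $\lipwnorm{\psi}=|\psi(o)|+\sup_{v}|v|D\psi(v)=1$ forces $D\psi\equiv 0$. Your approach can be repaired simply by adding one more test function supported at $o$ (e.g.\ $\tfrac12\chi_o$), but as written the ``separate short argument'' does not go through.

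The final paragraph on isometric zero divisors addresses a different statement and mixes up the definition (an isometric zero divisor is required to vanish on a prescribed set and to divide isometrically, not to satisfy $\lipwnorm{gh}=\lipwnorm{h}$ for all $h$); set that aside for the present theorem.
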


\begin{proof} It is clear that the constant functions of modulus one are symbols of isometric multiplication operators on $\LipW$ and $\LipWo$. Thus, assume $M_\psi$ is an isometry on $\LipW$ or $\LipWo$ so that, in particular,
\ben \lipwnorm{\psi}=\lipwnorm{M_\psi 1}=1.\label{onenorm}\eeqn
 First we are going to show that $\psi$ has constant modulus 1. Fix $v\in T^*$ and let $f_v=\displaystyle\frac1{|v|+1}\c_v$. Then $\lipwnorm{f_v}=1$ and so $1=\lipwnorm{\psi f_v}=|\psi(v)|$. On the other hand, for $g=\frac12\chi_o$, $\lipwnorm{g}=2|g(o)|=1$ and thus \begin{equation*}1=\lipwnorm{\psi g}=2|\psi(o)g(o)|=|\psi(o)|.\end{equation*} Hence, $|\psi(v)|=1$ for all $v\in T$. From (\ref{onenorm}) it follows that $D\psi$ must be identically 0. Therefore, $\psi$ is a constant function of modulus one.\end{proof}

Inspired by \cite{ADMV}, we now define the notion of isometric zero divisor in a tree setting.

\begin{definition}\label{zero_divisor} Let $X$ be a Banach space of functions defined on a tree $T$ and let $Z$ be a nonempty subset of $T$. A function $\psi\in X$ is called a \emph{zero divisor} for $Z$ if it vanishes precisely at the vertices in $Z$ and $g/\psi\in X$ for every $g\in X$ vanishing on $Z$. The function $\psi$ is said to be an \emph{isometric zero divisor} if $\|g/\psi\|=\|g\|$ for each $g\in X$ which vanishes on $Z$.
\end{definition}

Recalling the set $\mathcal{P}$ in Proposition~\ref{dense}, we now see that, under certain hypotheses on the space $X$, the isometric zero divisors induce isometric multiplication operators on $X$.

\begin{theorem}\label{zero_divisor_iso} Let $X$ be a functional Banach space on $T$ containing $\mathcal{P}$ and satisfying the following properties:
\begin{enumerate}
\item[\normalfont{(a)}] $\mathcal{P}$ is dense in $X$.

\item[\normalfont{(b)}] For each $v\in T$ and each $f\in X$, $p_vf\in X$.
\end{enumerate}
\noindent If $\psi\in X$ is an isometric zero divisor, then $M_\psi$ is an isometry on $X$.\end{theorem}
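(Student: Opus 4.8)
The plan is to prove the isometry identity first on the dense subspace $\mathcal{P}$ and then transfer it to all of $X$ by a soft density argument.

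First I would observe that $M_\psi$ maps $\mathcal{P}$ into $X$. Indeed, applying hypothesis (b) with the function $\psi\in X$ gives $p_v\psi\in X$ for every $v\in T$, so for any $p=\sum_{k=1}^N a_kp_{v_k}\in\mathcal{P}$ one has $\psi p=\sum_{k=1}^N a_k(p_{v_k}\psi)\in X$. The core step is then the claim that $\|\psi p\|=\|p\|$ for every $p\in\mathcal{P}$. To see it, fix $p\in\mathcal{P}$ and note that, since $\psi$ vanishes precisely on $Z$, the function $\psi p\in X$ also vanishes on $Z$, hence is an admissible numerator in Definition~\ref{zero_divisor}. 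Moreover $p$ itself is the quotient of $\psi p$ by $\psi$: off $Z$ this is the pointwise identity $(\psi p)(v)/\psi(v)=p(v)$, and globally the relation $\psi\cdot p=\psi p$ exhibits $p$ as a representative of $(\psi p)/\psi$ lying in $X$. Since $\psi$ is an isometric zero divisor, $\|(\psi p)/\psi\|=\|\psi p\|$, and therefore $\|p\|=\|\psi p\|$.

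Granting $\|\psi p\|=\|p\|$ for $p\in\mathcal{P}$, I would conclude as follows. The assignment $p\mapsto\psi p$ is linear and norm-preserving on the dense subspace $\mathcal{P}$, so it extends uniquely to a bounded linear operator $\Lambda$ on $X$ with $\|\Lambda f\|=\|f\|$ for all $f\in X$. To identify $\Lambda$ with $M_\psi$, fix $f\in X$ and pick $p_n\in\mathcal{P}$ with $p_n\to f$ in $X$. Because $X$ is a functional Banach space, $p_n\to f$ pointwise on $T$, whence $\psi p_n\to\psi f$ pointwise; on the other hand $\Lambda p_n=\psi p_n\to\Lambda f$ pointwise as well. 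Hence $\Lambda f=\psi f$, which shows both that $\psi f\in X$ and that $\|\psi f\|=\|\Lambda f\|=\|f\|$. Thus $M_\psi$ is an isometry on $X$.

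I expect the one genuinely delicate point to be the identification of $(\psi p)/\psi$ with $p$, since $\psi$ and $\psi p$ both vanish on $Z$: one must be careful that the isometric-zero-divisor hypothesis is invoked for the representative of the quotient compatible with the relation $\psi\cdot(\text{quotient})=\psi p$ across all of $T$, including on the vanishing set $Z$. The remaining work, the passage from $\mathcal{P}$ to $X$, is routine and rests only on the functional Banach space property, which guarantees that norm convergence forces pointwise convergence and thereby pins down the abstract extension $\Lambda$ as the concrete multiplication operator $M_\psi$.
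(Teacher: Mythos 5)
Your proposal is correct and follows essentially the same route as the paper: establish $\|\psi p\|=\|p\|$ for $p\in\mathcal{P}$ by applying the isometric zero divisor property to $g=p\psi$ (which lies in $X$ by hypothesis (b) and vanishes on $Z$), then use density of $\mathcal{P}$, completeness, and the boundedness of point evaluations to identify the limit of $\psi p_n$ with $\psi f$ and transfer the norm identity. Your packaging via the abstract extension operator $\Lambda$ is just a mild reformulation of the paper's direct Cauchy-sequence argument, and your flagged concern about interpreting $(\psi p)/\psi$ on $Z$ is resolved exactly as the paper implicitly does.
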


\begin{proof} Let $\psi$ be an isometric zero divisor with zero set $Z$. Then, for each $p\in\mathcal{P}$, $p\psi\in X$ and vanishes at the vertices in $Z$, so
\ben \|p\|=\left\|\frac{p\psi}{\psi}\right\|=\|p \psi\|.\label{same_norm}\eeqn
We wish to show that $\psi f\in X$ and $\|\psi f\|=\|f\|$ for each $f\in X$. Fix $f\in X$ and using the density of $\mathcal{P}$, let $\{p_n\}$ be a sequence in $\mathcal{P}$ such that $\|p_n- f\|\to 0$ as $n\to\infty$. Since $\mathcal{P}$ is closed under addition, using (\ref{same_norm}), for $n,m\in\N$ we have
$\|p_n \psi-p_m\psi\|=\|p_n-p_m\|$, so $\{p_n\psi\}$ is a Cauchy sequence in $X$. By the completeness of $X$, there exists $g\in X$ such that $\|p_n \psi-g\|\to 0$ as $n\to\infty$. Since $X$ is a functional Banach space, the point evaluation functionals are bounded, hence $p_n\psi\to g$ and $p_n\to f$ pointwise in $T$. Therefore $g=\psi f$, proving that $\psi f\in X$. Moreover, by the triangle inequality and (\ref{same_norm}), we have
\ben |\|\psi f\|-\|f\||&\le& |\|\psi f\|-\|p_n \psi\||+|\|p_n\psi\|-\|p_n\||+|\|p_n\|-\|f\||\nonumber\\&\le& \|\psi f-p_n \psi\|+\|p_n-f\|\to 0,\nonumber\eeqn as $n\to\infty$. Therefore, $\|\psi f\|=\|f\|$, as desired.\end{proof}

We now turn our attention to the existence of isometric zero divisors on the spaces $\LipW$ and $\LipWo$.

\begin{corollary}\label{no_iso_zero} The space \rm{$\LipWo$} has no isometric zero divisors.
\end{corollary}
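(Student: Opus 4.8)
The plan is to argue by contradiction, combining Theorem~\ref{zero_divisor_iso} with Theorem~\ref{iso_constant}. Suppose $\psi \in \LipWo$ is an isometric zero divisor for some nonempty zero set $Z \subseteq T$. We know from Proposition~\ref{dense} that $\mathcal{P}$ is dense in $\LipWo$, and from the identity $p_v p_u \in \{0, p_v, p_u\}$ recorded in the proof of Theorem~\ref{cyclic_in_lm0} (together with the fact that $p_v f$ has $D(p_v f)$ supported on the descendants of $v$, where $f$ is bounded by Proposition~\ref{lm0}) that $\LipWo$ satisfies property~(b) of Theorem~\ref{zero_divisor_iso}: $p_v f \in \LipWo$ for every $v \in T$ and every $f \in \LipWo$. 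Since $\LipWo$ is a functional Banach space (it is a closed subspace of $\LipW$, which is functional by Corollary~\ref{funct_Banach}) containing $\mathcal{P}$, Theorem~\ref{zero_divisor_iso} applies and forces $M_\psi$ to be an isometry on $\LipWo$. But then Theorem~\ref{iso_constant} tells us that $\psi$ must be a constant function of modulus one, which in particular never vanishes, contradicting the requirement in Definition~\ref{zero_divisor} that $\psi$ vanish precisely on the nonempty set $Z$.

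The only point requiring a small verification is property~(b) of Theorem~\ref{zero_divisor_iso} for $X = \LipWo$, i.e.\ that $p_v f \in \LipWo$ whenever $f \in \LipWo$. For $w \in T^*$ one computes $D(p_v f)(w) = 0$ unless $w \in S_v$, and for $w \in S_v$ with $w \neq v$ one has $D(p_v f)(w) = Df(w)$, while $D(p_v f)(v) = |f(v)|$; since $f \in \LipWo$ gives $|w| Df(w) \to 0$ and $|f(v)|$ is a fixed finite constant contributing only at the single vertex $v$, we get $|w| D(p_v f)(w) \to 0$ as $|w| \to \infty$, so $p_v f \in \LipWo$. This is the step I expect to be the main (though still routine) obstacle, since it is the one place where one must check that the subspace $\LipWo$, not merely $\LipW$, is closed under the relevant multiplications; everything else is a direct citation of the two theorems above.

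Since in fact $\LipWo$ and $\LipW$ satisfy all the hypotheses of Theorem~\ref{zero_divisor_iso} except that $\mathcal{P}$ need not be dense in $\LipW$, one could equally phrase the argument so as to cover $\LipW$ as well, but the stated corollary only asserts the result for $\LipWo$, so the proof above suffices.
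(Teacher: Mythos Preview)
Your proof is correct and follows essentially the same route as the paper: verify that $\LipWo$ satisfies the hypotheses of Theorem~\ref{zero_divisor_iso}, apply it to conclude that any isometric zero divisor would induce an isometric multiplication operator, and then invoke Theorem~\ref{iso_constant} to obtain a contradiction. The paper's verification of hypothesis~(b) is slightly different in flavor (it observes that $\mathcal{P}$ is closed under multiplication by $p_v$ and appeals to density, implicitly using that $M_{p_v}$ is bounded), whereas you compute $D(p_vf)$ directly; both are fine, and your explicit computation is arguably cleaner.

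One small slip: in your first paragraph you cite Proposition~\ref{lm0} as saying that $f$ is bounded, but that proposition only gives $f(v)/\log|v|\to 0$, and functions in $\LipWo$ need not be bounded. Fortunately this plays no role in your actual verification in the second paragraph, where you correctly use only that $|f(v)|$ is a finite number contributing at the single vertex $v$; so the argument stands once that stray citation is removed.
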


\begin{proof} By Theorem~\ref{iso_constant}, the only isometric multiplication operators are the constants of modulus one, which do not vanish anywhere. Observe that for each $v\in T$, recalling that $p_v$ is the characteristic function of the set consisting of $v$ and all its descendants, the set $\mathcal{P}$ is closed under multiplication by $p_v$. Thus, by Proposition~\ref{dense}, for each $v\in T$ and each $f\in\LipWo$, the function $p_v f\in \LipWo$. Therefore, since $\LipWo$ is a functional Banach space, the space $\LipWo$ satisfies the hypotheses of Theorem~\ref{zero_divisor_iso} and thus, no isometric zero divisors can exist.\end{proof}

\begin{theorem}\label{no_iso_zero_either} The space \rm{$\LipW$} has no isometric zero-divisors.
\end{theorem}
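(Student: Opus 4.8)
The plan is to reduce the question for $\LipW$ to the one already settled for $\LipWo$ in Corollary~\ref{no_iso_zero}. Suppose, for contradiction, that $\psi\in\LipW$ is an isometric zero divisor with zero set $Z\subseteq T$. The strategy is to show that such a $\psi$ would force $M_\psi$ to be an isometry on $\LipW$, and then invoke Theorem~\ref{iso_constant} to conclude that $\psi$ is a constant of modulus one, which is absurd since a constant of modulus one never vanishes, so $Z$ would be empty.

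First I would observe that $\LipW$ satisfies the structural hypotheses needed to run the argument in the proof of Theorem~\ref{zero_divisor_iso}: it is a functional Banach space (Corollary~\ref{funct_Banach}), it contains $\mathcal{P}$, and it is closed under multiplication by each $p_v$ (this follows because $p_v\in\LipW$ and, by the product estimate (\ref{prodrule}) together with the fact that $p_v$ is bounded with $Dp_v=\chi_v$ supported at a single vertex, $p_vf\in\LipW$ whenever $f\in\LipW$). The only hypothesis of Theorem~\ref{zero_divisor_iso} that \emph{fails} for $\LipW$ is density of $\mathcal{P}$: indeed $\mathcal{P}\subseteq\LipWo$, and $\LipWo$ is a proper closed subspace of $\LipW$, so $\overline{\mathcal{P}}=\LipWo\neq\LipW$. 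Hence I cannot quote Theorem~\ref{zero_divisor_iso} directly, and the main work is to recover the isometry conclusion without full density.

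The key step is the following substitute for density: I claim that for every $f\in\LipW$ there is a \emph{bounded} sequence $\{p_n\}$ in $\mathcal{P}$ with $p_n\to f$ pointwise on $T$ and $\sup_n\lipwnorm{p_n}\le C\lipwnorm{f}$ for an absolute constant $C$; in fact the truncations $f_n$ from the proof of Proposition~\ref{dense} work, since each $f_n\in\mathcal{P}$, $f_n\to f$ pointwise, and $\lipwnorm{f_n}\le\lipwnorm{f}$ because $Df_n$ agrees with $Df$ on $|v|\le n$ and vanishes beyond. Applying the isometric zero divisor identity $\|p\|=\|p\psi\|$ for $p\in\mathcal{P}$ (exactly as in (\ref{same_norm})), together with $\|p_n\psi-p_m\psi\|=\|p_n-p_m\|$, shows $\{p_n\psi\}$ would be Cauchy \emph{provided} $\{p_n\}$ is Cauchy; but the $f_n$ need not be Cauchy in $\LipW$ (that is the whole point of non-density). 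So instead I would argue more carefully: $\{p_n\psi\}$ is a bounded sequence in $\LipW$ with $\lipwnorm{p_n\psi}=\lipwnorm{p_n}\le\lipwnorm{f}$, and $p_n\psi\to\psi f$ pointwise since $\psi$ is bounded (Corollary~\ref{funct_Banach}) and $p_n\to f$ pointwise; by the completeness argument in the proof of Theorem~\ref{Banach} (the pointwise limit of a bounded sequence in $\LipW$ lies in $\LipW$ with norm bounded by the $\liminf$), we get $\psi f\in\LipW$ with $\lipwnorm{\psi f}\le\lipwnorm{f}$. For the reverse inequality, write $f=(\psi f)\cdot\psi^{-1}$ off $Z$; since $\psi$ is an isometric zero divisor and $\psi f$ vanishes on $Z$, the definition gives $\lipwnorm{f}=\lipwnorm{(\psi f)/\psi}=\lipwnorm{\psi f}$. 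Thus $M_\psi$ is an isometry on $\LipW$, Theorem~\ref{iso_constant} forces $\psi$ to be constant of modulus one, and this contradicts $\psi$ having a nonempty zero set.

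The main obstacle is precisely this failure of density of $\mathcal{P}$ in $\LipW$, which blocks a verbatim appeal to Theorem~\ref{zero_divisor_iso}; the fix is to replace the "Cauchy sequence plus completeness" mechanism by the weaker but sufficient "bounded sequence with a pointwise limit lands back in $\LipW$ with controlled norm" mechanism, which is available from the proof of Theorem~\ref{Banach}, and then to use the isometric zero divisor property itself — applied to the element $\psi f$, which genuinely vanishes on $Z$ — to upgrade the norm inequality $\lipwnorm{\psi f}\le\lipwnorm{f}$ to an equality. One should double-check that $\psi f$ indeed vanishes exactly on $Z$ when $f$ is, say, nowhere zero; if $f$ has its own zeros one applies the zero-divisor identity to $g=\psi f$ (which vanishes on $Z$, and possibly more) and notes $g/\psi=f$ a.e.\ off $Z$, so $\lipwnorm{f}=\lipwnorm{g}=\lipwnorm{\psi f}$ regardless.
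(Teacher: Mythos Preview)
Your argument is correct and takes a genuinely different route from the paper's proof. The paper proceeds by constructing explicit test functions: first $f_w=\frac{1}{|w|}p_w$ to show $|\psi(w)|\le 1$, then a logarithmic family $g_w$ to force $\sup_{|v|\ge 2}|v|\log|v|\,D\psi(v)<\infty$, whence $\psi\in\LipWo$; it then invokes Corollary~\ref{no_iso_zero}. You bypass all of this by showing directly that $M_\psi$ is an isometry on $\LipW$: the truncations $f_n\in\mathcal{P}$ satisfy $f_n\to f$ pointwise with $\lipwnorm{f_n}\nearrow\lipwnorm{f}$, the identity~(\ref{same_norm}) gives $\lipwnorm{f_n\psi}=\lipwnorm{f_n}$, lower semicontinuity of the norm under bounded pointwise limits (implicit in the proof of Theorem~\ref{Banach}) yields $\psi f\in\LipW$, and then a second application of the isometric zero-divisor hypothesis to $g=\psi f$ gives $\lipwnorm{f}=\lipwnorm{\psi f}$. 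Theorem~\ref{iso_constant} finishes. Your approach is cleaner and avoids the ad~hoc constructions; the paper's approach has the side benefit of extracting the quantitative fact that $\psi\in\LipWo$ with $\|\psi\|_\infty\le 1$.

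Two minor points. First, your appeal to Corollary~\ref{funct_Banach} to justify boundedness of $\psi$ is circular at that stage (the corollary assumes $M_\psi$ already maps $\LipW$ into itself), but it is also unnecessary: pointwise convergence $f_n\psi\to f\psi$ follows from $f_n(v)=f(v)$ for $n\ge|v|$, with no hypothesis on $\psi$. Second, your step~2 relies on the identification $(\psi f)/\psi=f$, which in the tree setting is a genuine convention about how the quotient is defined on the zero set $Z$; but this is exactly the same convention underlying~(\ref{same_norm}) in the paper's proof of Theorem~\ref{zero_divisor_iso}, so you are not assuming anything beyond what the paper already grants.
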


\begin{proof} By Corollary~\ref{no_iso_zero}, it suffices to show that the isometric zero divisors of $\LipW$ are in $\LipWo$.

Assume $\psi$ is an isometric zero divisor of $\LipW$. We begin by showing that $\psi$ is bounded. Fix $w\in T^*$ and define $f_w=\displaystyle\frac1{|w|}p_w$.
 Then, $f_w\in \mathcal{P}$ and for $v\in T^*$, we have
\begin{equation*}|v||f_w(v)-f_w(v^-)|=\frac{|v|}{|w|}\chi_w(v)=\chi_w(v),\end{equation*}
so $\lipwnorm{f_w}=1$. Therefore, $\lipwnorm{\psi f_w}=1$. Letting $D_w$ be the set of descendants of $w$, we have
\begin{equation*}\lipwnorm{\psi f_w}=\max\left\{|\psi(w)|,\sup_{v\in D_w}\frac{|v|}{|w|}|\psi(v)-\psi(v^-)|\right\}\ge |\psi(w)|.\end{equation*}
Hence, $|\psi(w)|\le 1$, proving the boundedness of $\psi$.

For $|w|\ge 2$, let us define \begin{equation*}g_w(v)=\begin{cases} 0&\quad\hbox{ if }v=o,\\
\frac1{|w|}\log|v| &\quad\hbox{ if }1\le |v|<|w|,\\
\frac1{|w|}\log|w| &\quad\hbox{ if }|v|\ge |w|.\end{cases}\end{equation*}
Since $g_w$ has finite support and by (\ref{chi_v}), the function $\chi_v$ is in $\mathcal{P}$, we deduce that $g_w\in\mathcal{P}$, so by (\ref{same_norm}), we obtain
\ben \ \ \ \ \ \ \lipwnorm{\psi g_w}=\lipwnorm{g_w}=\sup_{2\le |v|\le |w|}\frac{|v|}{|w|}\left(\log|v|-\log(|v|-1)\right)\le \frac{2\log 2}{|w|}.\label{norm_gw_estimate}\eeqn
On the other hand,
\ben \lipwnorm{\psi g_w}&\ge &\sup_{2\le |v|\le |w|}\frac{|v|}{|w|}\left|(\psi(v)-\psi(v^-))\log|v|+\psi(v^-)\log\frac{|v|}{|v|-1}\right|\nonumber\\
&\ge& \sup_{2\le |v|\le |w|}\frac{|v|}{|w|}|\psi(v)-\psi(v^-)|\log|v|-\sup_{2\le |v|\le |w|}\frac{|v|}{|w|}|\psi(v^-)|\log\frac{|v|}{|v|-1}\nonumber\\
&\ge& \sup_{2\le |v|\le |w|}\frac{|v|}{|w|}|\psi(v)-\psi(v^-)|\log|v|-\frac{2\log2}{|w|}\|\psi\|_\infty.\nonumber\eeqn
Therefore, using (\ref{norm_gw_estimate}), we get \begin{equation*}\sup_{2\le |v|\le |w|}\frac{|v|}{|w|}|\psi(v)-\psi(v^-)|\log|v|\le \frac{2\log 2(1+\|\psi\|_\infty)}{|w|}.\end{equation*}
Multiplying both sides by $|w|$ and letting $|w|\to \infty$, we obtain
\begin{equation*}\sup_{|v|\ge 2}|v||\psi(v)-\psi(v^-)|\log|v|<\infty.\end{equation*}
Hence $\lim\limits_{|v|\to\infty}|v||\psi(v)-\psi(v^-)|=0$, proving that $\psi\in\LipWo$.\end{proof}

\section*{Acknowledgements} 
The research of the first author is supported by a grant from the College of Science and Health of the University of Wisconsin-La Crosse.

\bibliographystyle{amsplain}
\bibliography{references.bib}
\end{document}